\newtheorem {defn}{Definition}[section]
\newtheorem {teo}[defn]{Theorem}
\newtheorem{lemm}[defn]{Lemma}
\newtheorem{cor}[defn]{Corollary}
\newtheorem{ex}[defn]{Example}
\newtheorem{oss}[defn]{Remark}
\newcommand{\hada} {\star}
\begin{document}
\title { On    Hadamard products   of linear varieties}
\thanks{Last updated: June 24, 2016}

\author{C. Bocci}
\address{Dipartimento di Ingegneria dell'Informazione e Scienze Matematiche,
Universit\`a di Siena\\
Via Roma, 56 Siena, Italy}
\email{cristiano.bocci@unisi.it}
\author{G. Calussi}
\address{Dipartimento di Matematica e Informatica "Ulisse Dini", Universit\`a di Firenze \\
Viale Morgagni 67/a, 50134 Firenze, Italy
\vskip0.1cm Dipartimento di Matematica e Informatica, Universit\`a di Perugia \\
via Vanvitelli 1, 06123 Perugia, Italy}
\email{gabriele.calussi@gmail.com}
\author{G. Fatabbi}
\address{Dipartimento di Matematica e Informatica, Universit\`a di Perugia \\
via Vanvitelli 1, 06123 Perugia, Italy}
\email{giuliana.fatabbi@unipg.it}
\author{A. Lorenzini}
\address{Dipartimento di Matematica e Informatica, Universit\`a di Perugia \\
via Vanvitelli 1, 06123 Perugia, Italy}
\email{annalor@dmi.unipg.it}

\begin{abstract} In this paper we address the Hadamard product of linear varieties not necessarily in general position.
\par
In $\mathbb{P}^2$ we obtain a complete description of the possible outcomes. In particular, in the case of two disjoint finite sets
$X$ and $X'$ of collinear points, we get conditions for $X\hada X'$ to be either a collinear finite set of points or a grid of
$| X|| X'|$ points.
\par
In $\mathbb{P}^3,$ under suitable conditions (which we prove to be generic), we show that $X\hada X'$ consists of $| X||X'|$ points on the two different rulings of a non-degenerate quadric and we compute its Hilbert function in the case $|X| = |X'|.$

\end{abstract}

\maketitle

\section{Introduction}
 The Hadamard product of matrices  is well known in linear algebra: it has nice properties in matrix analysis (\cite{HM,Liu,LT}) and has applications in both statistics and physics (\cite{Liu2, LN, LNP}).
Recently, in the papers \cite{CMS, CTY}, the authors define a Hadamard product between projective varieties $X,Y\subset {\mathbb{P}}^n$, denoted $X \hada Y$, as the  closure of the image of the rational map
\[
X \times Y \dashrightarrow  {\mathbb{P}}^n, \quad  ([a_0:\dots : a_n], [b_0: \dots :b_n])\mapsto [a_0b_0 :a_1b_1:\ldots :a_nb_n].
\]
and use it to describe the algebraic variety associated to the restricted Boltzmann machine, which is the undirected graphical model for binary random variables specified by the bipartite graph $K_{r,n}$ (note that  \cite{CTY} concerns the case $r=2, n=4$). Using  the definition in \cite{CMS}, the first author, together with E. Carlini and J. Kileel, started to study more deeply the Hadamard product of projective varieties, with particular emphasis for the case of linear spaces (\cite{BCK}).

Since the Hadamard product of varieties is far to be completely understood and studied, our paper wants to be a natural continuation of the paper \cite{BCK}. Here we still study the case of both linear spaces and zero-dimensional schemes in ${\mathbb{P}}^2$ and ${\mathbb{P}}^3$, dropping the hypothesis that these varieties  be in general position. The condition to be not in general position means that points can have many zero coordinates and linear spaces  can intersect coordinate hyperplanes in dimension greater than the expected one for the case of general position. This fact forces us to study all possible pathological behaviors  that then can happen in the Hadamard  product of such varieties. For the case of ${\mathbb{P}}^2$, Theorem \ref{teorettapunto} gives a complete classification of all possible cases of the Hadamard product between a point and a line, while Theorem \ref{retta2punti} studies all possible cases of incidence of $Q\hada L$ and $Q' \hada L$ for two distinct points $Q,Q'$ and a line $L$. These results lead to Theorem \ref{grid} where we prove that, under suitable conditions, the Hadamard product of  two sets of collinear points $X$ and $Y$ is a complete intersection. Recall that a set $X$ of points of ${\mathbb{P}}^n$ is a {\it complete intersection} if its ideal $I_X$ is generated by $n$ forms.
 \par
 Turning to the case of $\mathbb{P}^3$, we notice that the Hadamard product  of two sets of collinear points $X$ and $Y$ is not, in general, a complete intersection. However, we can prove, under generic assumptions, that $X \hada Y$ is a grid on a quadric (Theorem  \ref{teop3})  and we are able to compute its Hilbert function when $|X| = |Y|$  (Theorem  \ref{HF3}). In this case we also prove that $X\hada Y$ is never a complete intersection (assuming $|X| = |Y|>1$).
\par
We work over an algebraically closed  field $\mathbb{K}.$
\par
We denote by $HF_X$ the {\em Hilbert function} of a finite set of  projective points $X$, that is  $HF_X(t)=\dim_\mathbb{K} R/I_{X_t}$, where $R=\mathbb{K}[x_0, \dots, x_n]$ and $I_X$ is the (radical) ideal defining $X$. It is well known that, after being strictly increasing, $HF_X(t)=|X |$ for $t$ sufficiently large and we call {\em regularity index of $X,$} denoted by $\tau_{X},$ the least  degree where this happens:
$$\tau_{X}=\min\{t\geq 0\colon HF_X(t)=|X| \}. $$
\par
We wish to thank the referee for his/her useful comments and suggestions: in particular his/her questions inspired Remark \ref{unique} and Corollary \ref{ci}.

\section{General results in $\mathbb{P}^n$}

  As in \cite{BCK}, $H_i\subset \mathbb{P}^n$  denotes  the hyperplane defined by $x_i=0$ and
$$\Delta_i=\bigcup_{0\leq j_1<\dots <j_{n-i}\leq n}H_{j_1}\cap\cdots\cap H_{j_{n-i}}.$$

Recall that $\Delta_i$ can be viewed as the $i-$dimensional variety of points having at most $i+1$ non-zero coordinate, equivalently at least $n-i$ zero coordinates.

We set $\Delta_{-1}$ to be the set $\{(0,\dots,0)\}$ and we write  $P\hada Q\in \Delta_{-1}$ if it is not defined.

It easily follows from Lemma 3.2 of \cite{BCK} that:
\begin{teo} \label{teobocci}
$\quad$
\begin{enumerate}
\item Let $P,Q,A$ be points of  $\mathbb{P}^n$ with $A\not\in  \Delta_{n-1},$ then $P\hada A=Q\hada A$ if and only if $P=Q.$
\item Let $H\subset  \mathbb{P}^n$ be  a hyperplane defined by  $a_0x_0+\cdots +a_nx_n=0$ and  such that $H\cap \Delta_0=\emptyset,$  let $P,Q$  be points not in    $\Delta_{n-1}$ with $P=[p_0:\dots:p_n],$ then $P\hada H:\{\frac{a_0}{p_0}x_0+\dots+\frac{a_n}{p_n}x_n=0\}$  and  $P\hada H=Q\hada H$ if and only if  $P=Q.$
\item Let $P\not\in \Delta_{n-1}$, let  $H$, $K$ be two hyperplanes  such that $H\cap \Delta_0=\emptyset =K\cap \Delta_0,$ then $P\hada H=P\hada K$ if and only if $H=K$.
\end{enumerate}
\end{teo}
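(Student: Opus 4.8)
The plan is to reduce everything to the single observation that Hadamard multiplication by a point with no zero coordinate is injective, which in turn is just the invertibility of a diagonal scaling. First I would translate the hypotheses into coordinate conditions: a point lies outside $\Delta_{n-1}$ precisely when all its coordinates are nonzero, and a hyperplane $H=\{a_0x_0+\cdots+a_nx_n=0\}$ contains the coordinate point $e_j$ exactly when $a_j=0$, so $H\cap\Delta_0=\emptyset$ is equivalent to all $a_i$ being nonzero. These two translations also guarantee that none of the Hadamard products below fall into $\Delta_{-1}$, so that they are genuinely defined.

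The key fact I would record from Lemma 3.2 of \cite{BCK} is that multiplying by a point $M=[m_0:\dots:m_n]\not\in\Delta_{n-1}$ is a linear automorphism of $\mathbb{P}^n$: writing $\phi_M$ for the automorphism given by the invertible diagonal matrix $\mathrm{diag}(m_0,\dots,m_n)$, one has $N\hada M=\phi_M(N)=[m_0n_0:\dots:m_nn_n]$ for every point $N=[n_0:\dots:n_n]$, and $M\hada H=\phi_M(H)=\{\frac{a_0}{m_0}x_0+\cdots+\frac{a_n}{m_n}x_n=0\}$ for every hyperplane $H=\{a_0x_0+\cdots+a_nx_n=0\}$, the latter equation obtained by substituting $x_i\mapsto x_i/m_i$. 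Taking $M=P$ gives in particular the displayed formula for $P\hada H$ in part (2).

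For part (1) the point $A$ itself lies off $\Delta_{n-1}$, so $\phi_A$ is a bijection; hence $P\hada A=Q\hada A$ reads $\phi_A(P)=\phi_A(Q)$ and forces $P=Q$ (concretely $p_ia_i=\lambda q_ia_i$ for some $\lambda\neq0$, and each $a_i\neq0$ cancels). I would then deduce (2) and (3) from (1) by passing to the coefficient point $h=[a_0:\dots:a_n]$ of $H$, which lies off $\Delta_{n-1}$ because $H\cap\Delta_0=\emptyset$ makes all $a_i\neq0$: by the substitution above the coefficient point of $P\hada H$ is $h\hada P^{-1}$, where $P^{-1}=[p_0^{-1}:\dots:p_n^{-1}]$ (legitimate since $P\not\in\Delta_{n-1}$). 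Thus $P\hada H=Q\hada H$ becomes $h\hada P^{-1}=h\hada Q^{-1}$, and part (1) with $h$ in the role of $A$ gives $P^{-1}=Q^{-1}$, hence $P=Q$; while $P\hada H=P\hada K$ becomes $h\hada P^{-1}=k\hada P^{-1}$ (with $k$ the coefficient point of $K$), and part (1) with $P^{-1}$ in the role of $A$ gives $h=k$, i.e.\ $H=K$.

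I do not expect a genuine mathematical obstacle here, which is exactly why the statement follows so quickly once the product formula of \cite{BCK} is in hand; the only thing to watch is the bookkeeping of nonvanishing. At each cancellation one must confirm that the scalars divided out are precisely the coordinates or coefficients forced to be nonzero by the hypotheses $A\not\in\Delta_{n-1}$, $P\not\in\Delta_{n-1}$, $H\cap\Delta_0=\emptyset$ and $K\cap\Delta_0=\emptyset$, and that the auxiliary point $P^{-1}$ is well defined (again because $P\not\in\Delta_{n-1}$). Keeping track of which hypothesis licenses which division is essentially the entire content of the argument.
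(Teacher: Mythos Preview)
Your argument is correct and follows exactly the route the paper indicates: the paper does not give a proof but simply states that the theorem ``easily follows from Lemma 3.2 of \cite{BCK},'' and your write-up is precisely a careful unpacking of that citation via the diagonal automorphism $\phi_M$. The reduction of parts (2) and (3) to part (1) through the coefficient point and the inverse point $P^{-1}$ is a clean way to organize the bookkeeping.
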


\begin{teo}
Let $P \in \mathbb{P}^n\setminus \Delta_{n-1}$, let $H$,$K$ be   two  hyperplanes  such that $H\cap \Delta_0=\emptyset= K\cap \Delta_0, $  then $P\hada (H\cap K)=( P\hada H )\cap (P\hada K)$.
\end{teo}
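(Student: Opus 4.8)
The plan is to exploit the fact that Hadamard multiplication by a fixed point with no vanishing coordinate is a projective automorphism. Since $P\in\mathbb{P}^n\setminus\Delta_{n-1}$, writing $P=[p_0:\dots:p_n]$ we have $p_i\neq 0$ for all $i$, so the assignment $R\mapsto P\hada R$ is everywhere defined on $\mathbb{P}^n$ and coincides with the invertible diagonal map $\mathrm{diag}(p_0,\dots,p_n)$, whose inverse is Hadamard multiplication by $P^{-1}:=[p_0^{-1}:\dots:p_n^{-1}]$, again a point off $\Delta_{n-1}$. In particular every Hadamard product in the statement is a genuine image (no closure issue arises) and each of $P\hada H$, $P\hada K$ is an honest hyperplane.

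Next I would record explicit equations. Writing $H:\sum_i a_ix_i=0$ and $K:\sum_i b_ix_i=0$, the hypotheses $H\cap\Delta_0=\emptyset=K\cap\Delta_0$ mean all the $a_i$ and $b_i$ are nonzero, so Theorem \ref{teobocci}(2) applies and gives $P\hada H:\sum_i (a_i/p_i)x_i=0$ and $P\hada K:\sum_i (b_i/p_i)x_i=0$. Thus $(P\hada H)\cap(P\hada K)$ is the linear space cut out by these two equations.

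The heart of the argument is then a single biconditional. For a point $S=[s_0:\dots:s_n]$ set $R:=P^{-1}\hada S=[s_0/p_0:\dots:s_n/p_n]$, which is well defined precisely because $P\notin\Delta_{n-1}$, and note $S=P\hada R$. Since $\sum_i a_i(s_i/p_i)=\sum_i (a_i/p_i)s_i$, the point $R$ lies on $H$ if and only if $S$ lies on $P\hada H$, and likewise $R\in K$ if and only if $S\in P\hada K$. Hence $R\in H\cap K$ exactly when $S\in(P\hada H)\cap(P\hada K)$. As $S\mapsto R$ is a bijection with inverse $R\mapsto P\hada R$, the condition $R\in H\cap K$ is in turn equivalent to $S\in P\hada(H\cap K)$, and chaining the two equivalences yields the desired equality of sets.

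I expect the only real obstacle to be the inclusion $(P\hada H)\cap(P\hada K)\subseteq P\hada(H\cap K)$: one must produce, for each $S$ in the intersection, a preimage lying in $H\cap K$. This is exactly where $P\notin\Delta_{n-1}$ is indispensable, since it is what makes the diagonal map invertible and lets us divide the coordinates of $S$ by those of $P$; without it a point of $(P\hada H)\cap(P\hada K)$ need not come from $H\cap K$ at all. The reverse inclusion is formal, following at once from $R\in H\cap K\Rightarrow P\hada R\in (P\hada H)\cap(P\hada K)$.
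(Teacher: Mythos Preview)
Your proof is correct and takes a different route from the paper's. The paper argues by dimension: it first shows the easy inclusion $P\hada(H\cap K)\subseteq(P\hada H)\cap(P\hada K)$ directly, then for the reverse inclusion it invokes Theorem~\ref{teobocci}(3) to see that $P\hada H\neq P\hada K$ (when $H\neq K$), so the right-hand side is a linear subspace of dimension $n-2$, and invokes Lemma~3.1 of \cite{BCK} to see that the left-hand side is also a linear subspace of dimension $n-2$; equality then follows from the containment. Your approach instead exploits that Hadamard multiplication by $P\notin\Delta_{n-1}$ is the projective automorphism $\mathrm{diag}(p_0,\dots,p_n)$, so images are already closed and intersections are preserved, and you make this explicit via the inverse $P^{-1}$. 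What your approach buys is greater generality: the argument works verbatim with $H$ and $K$ replaced by arbitrary closed subsets of $\mathbb{P}^n$, and it never actually uses the hypothesis $H\cap\Delta_0=\emptyset=K\cap\Delta_0$ (you cite it only to quote the explicit equation from Theorem~\ref{teobocci}(2), but the identity $\sum a_i(s_i/p_i)=\sum(a_i/p_i)s_i$ holds regardless). The paper's approach, by contrast, stays within the framework of the cited lemmas and makes the role of the hypotheses on $H$ and $K$ visible through Theorem~\ref{teobocci}(3).
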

\begin{proof} We may assume $H\not= K,$ for otherwise the result is trivial.
\par For all $Q\in H\cap K,$ we have  $P\hada Q\in P\hada (H\cap K) $   and $P\hada Q\in (P\hada H)\cap (P\hada K),$ hence  $P\hada (H\cap K) \subseteq (P\hada H)\cap (P\hada K),$ being the right-hand  side a closed set. To see the other inclusion, by  $ (3)$ of  Theorem \ref{teobocci}, we have   $P\hada H\not=P\hada K,$ then $(P\hada H)\cap (P\hada K)$ is a linear subspace of dimension $n-2$. Since  $P\not\in\Delta_{n-1}, $ it follows from  Lemma  3.1 of \cite{BCK} that  $P\hada (H\cap K) $ is a linear subspace of dimension $n-2. $ Therefore $P\hada (H\cap K) $ is the intersection of the hyperplanes $P\hada H$ and $P\hada K$.
\end{proof}

\begin{cor}\label{hadarettap3}
Let $ P\in \mathbb{P}^3\setminus  \Delta_2$ and let  $H,K$  be two planes such that  $L =H\cap K$ and  $H\cap \Delta_0=\emptyset= K\cap \Delta_0, $    then $P\hada L$ is the intersection of the two  planes $P\hada H$ and $P\hada K$.
\end{cor}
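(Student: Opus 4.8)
The plan is to observe that this corollary is nothing but the specialization of the theorem immediately preceding it to the case $n=3$. In $\mathbb{P}^3$ the hyperplanes are precisely the planes, so $H$ and $K$ are admissible choices of hyperplanes, and the codimension-one variety $\Delta_{n-1}$ becomes $\Delta_2$; thus the hypothesis $P\in \mathbb{P}^3\setminus\Delta_2$ is exactly the hypothesis $P\in\mathbb{P}^n\setminus\Delta_{n-1}$ for $n=3$. Likewise the conditions $H\cap\Delta_0=\emptyset=K\cap\Delta_0$ are identical to those required by the theorem.

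With these identifications, I would simply apply the theorem: since $L=H\cap K$, its conclusion $P\hada(H\cap K)=(P\hada H)\cap(P\hada K)$ reads directly as $P\hada L=(P\hada H)\cap (P\hada K)$. To make the geometric content transparent I would also note, via $(2)$ of Theorem \ref{teobocci}, that $P\hada H$ and $P\hada K$ are themselves planes in $\mathbb{P}^3$ (each being the Hadamard product of a point off $\Delta_2$ with a hyperplane meeting $\Delta_0$ trivially), so the right-hand side is genuinely the intersection of two planes, while $L$ is a line. There is essentially no obstacle here: the only thing to check is that the dictionary hyperplane $\leftrightarrow$ plane and $\Delta_{n-1}\leftrightarrow\Delta_2$ is applied correctly, after which the statement is an immediate instance of the general $\mathbb{P}^n$ result.
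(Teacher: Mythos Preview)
Your proposal is correct and matches the paper's approach exactly: the corollary is stated without proof in the paper because it is the immediate specialization to $n=3$ of the preceding theorem, and you have identified the dictionary (hyperplane $\leftrightarrow$ plane, $\Delta_{n-1}\leftrightarrow\Delta_2$) precisely. Your additional remark that $P\hada H$ and $P\hada K$ are genuinely planes via $(2)$ of Theorem~\ref{teobocci} is a welcome clarification beyond what the paper itself spells out.
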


Now we look at the products of two hyperplanes $H$ and $K .$
\begin{oss}\label{coord}{\em If $H$ and $K$ are coordinate hyperplanes respectively defined by $x_i=0$ and $x_j=0,$ then $H\hada K$ is the hyperplane defined by $x_i=0,$ when $i=j$ and the linear subspace defined by $x_i=x_j=0,$ when $i\neq j.$}
 \end{oss}

\begin{teo}\label{hyp_2_var}
Let $H,$  $K$ be  the hyperplanes of  $\mathbb{P}^n$  defined by $a_ix_i+a_jx_j=0$ and $b_ix_i+b_jx_j=0$ respectively, with $i\neq j$ in $\{0,\dots,n\}$ and either  $a_i a_j \neq 0$ or   $b_i b_j \neq 0.$ Then $H\star K$ is the hyperplane defined by $a_ib_ix_i-a_jb_jx_j=0.$
 \end{teo}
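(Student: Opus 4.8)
The plan is to prove the two inclusions $H\hada K\subseteq M$ and $M\subseteq H\hada K$, where $M$ denotes the hyperplane defined by $a_ib_ix_i-a_jb_jx_j=0$. Since $\hada$ is commutative and the equation of $M$ is unchanged under interchanging the roles of $H$ and $K$ (that is, under $a\leftrightarrow b$), and since the hypothesis guarantees that at least one of $a_ia_j\neq 0$, $b_ib_j\neq 0$ holds, I would first reduce to the case $a_ia_j\neq 0$. The coefficients $b_i,b_j$ are then only subject to $(b_i,b_j)\neq(0,0)$, so $K$ is either a genuine hyperplane involving both $x_i$ and $x_j$, or one of the coordinate hyperplanes $H_i,H_j$. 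Note that because $a_ia_j\neq 0$, at least one of $a_ib_i$, $a_jb_j$ is nonzero, so $M$ is indeed a hyperplane.

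For the inclusion $H\hada K\subseteq M$ I would argue by direct computation. Using $a_ia_j\neq 0$, a general point of $H$ has $(p_i,p_j)$ proportional to $(a_j,-a_i)$ with the remaining coordinates free, while a point $Q\in K$ satisfies $b_iq_i+b_jq_j=0$. Writing $r_k=p_kq_k$ for the coordinates of $P\hada Q$, I would substitute the two linear relations into $a_ib_ir_i-a_jb_jr_j$ and check that it vanishes identically; hence $P\hada Q\in M$ for every $P\in H$, $Q\in K$. Since $M$ is closed, the closure $H\hada K$ is contained in $M$.

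For the reverse inclusion it suffices, $M$ being an irreducible (hence reduced, $(n-1)$-dimensional) hyperplane, to show that the image of the parametrizing map is dense in $M$, i.e.\ that it meets a dense open subset. Given a point $[r_0:\cdots:r_n]\in M$ with $r_i\neq 0$, I would construct $P\in H$ and $Q\in K$ with $P\hada Q=[r]$ explicitly: set $q_k=1$ and $p_k=r_k$ for all $k\neq j$, then determine $q_j$ from the equation of $K$ and $p_j$ from the equation of $H$. For $k\neq i,j$ one has $p_kq_k=r_k$ directly, and $p_iq_i=r_i$; the defining equation of $M$ then forces $p_jq_j=r_j$, so the construction closes up. The condition $r_i\neq 0$ guarantees that $P$ is a genuine (nonzero) point, and $Q$ is automatically nonzero since $q_i=1$; thus the image contains the dense open subset $\{r_i\neq 0\}\cap M$, giving $M\subseteq H\hada K$.

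I expect the main obstacle to be the degenerate subcase $b_ib_j=0$, in which $K$ collapses to a coordinate hyperplane and $M$ likewise becomes $H_i$ or $H_j$. Here the parametrization above must be adapted (one of $q_i,q_j$ is forced to vanish), and one must recheck that the products still fill out a dense open of $M$ and that the constructed points do not degenerate. This is precisely where the hypothesis ``$a_ia_j\neq 0$ or $b_ib_j\neq 0$'' is used in an essential way, since it excludes the doubly-coordinate situation already treated in Remark \ref{coord}. Once these cases are settled, combining the two inclusions yields $H\hada K=M$.
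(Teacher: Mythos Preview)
Your plan is correct and follows essentially the same approach as the paper: both inclusions are verified by direct computation, with an explicit construction of preimages for points of $M$. The paper's construction in the main case fixes $P=[-a_1/a_0:1:\dots:1]\in H$ and actually surjects onto all of $M$ (so no density argument is needed), and the degenerate subcase you defer---only $b_j=0$ is truly problematic, since your construction already works when $b_i=0$ and $b_j\neq 0$---is the paper's case~(1).
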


 \begin{proof}  For simplicity of notation we may assume $i=0$ and $j=1,$ the other case being similar.
 \par
 We distinguish the following two cases:
\begin{enumerate}
\item If $a_1=0$ and $b_0b_1\not=0$, then $H:\{x_0=0\}.$
Let  $P=[0:p_1:p_2:\cdots:p_n]\in H$ and let  $Q\in K$, then $Q=[-\frac{b_1}{b_0}q_1:q_1:q_2:\cdots:q_n],$ and so $P\hada
Q=[0:p_1q_1:\cdots:p_nq_n]$, i.e. $H\hada K: \{x_0=0\}$.
\item If $a_0a_1\not=0$ and $b_0b_1\not=0,$ then   $P=[-\frac{a_1}{a_0}p_1:p_1:p_2\cdots:p_n]\in H$ and
$Q=[-\frac{b_1}{b_0}q_1:q_1:q_2:\cdots:q_n]\in K,$ thus  $P\hada Q=[\frac{a_1b_1}{a_0b_0}p_1q_1:p_1q_1:p_2q_2:\cdots:p_nq_n]$. We claim  that $H\hada K$ is the hyperplane  $L :\{ a_0b_0x_0-a_1b_1x_1=0\}$.
It is obvious that $H\hada K\subseteq L $. To see the other inclusion let $S=[s_0:\cdots:s_n]\in L$ and  let $P=[-\frac{a_1}{a_0} :1:1:\dots:1]\in H$  and consider $Q=[-\frac{a_0s_0}{a_1}: s_1 : s_2 :\cdots: s_n ].$
Clearly $P\hada Q=S$  and $b_0\frac{-(a_0s_0)}{a_1}+b_1 s_1 =0,$ i.e. $Q\in K$.
\end{enumerate}
\end{proof}

\begin{cor}
Let $H\subset\mathbb{P}^n$ be  the hyperplane defined by   $a_ix_i+a_jx_j=0, $   with $i\neq j$ in  $\{ 0,\dots,n\}$ and $a_ia_j\neq 0,$  then $H\hada H $ is the hyperplane defined by  $a_i^2x_i-a_j^2x_j=0$.
\end{cor}
\begin{proof}
It follows immediately  from  Theorem \ref{hyp_2_var} with $H=K$.
\end{proof}

\begin{oss}\label{Hi}
{\em  If $Q\in H_i,$  for some $i\in\{0,\dots,n\}, $ then, for all $X\subseteq \mathbb{P}^n$,  we have $Q\hada X\subseteq H_i,$ hence $H_i\hada X\subseteq H_i.$}
\end{oss}

\begin{ex}\rm
Let $H$ and $K$ be the planes in ${\mathbb{P}}^3$ of equations respectively
\[
H: 3x_1-2x_3=0 \quad K: -7x_1+4x_3=0.
\]
Using the procedure, in \texttt{Singular}, described in Section \ref{singular} we get

{\small{\begin{verbatim}
ring r=0,(x(0..3)),dp;
ideal H=3*x(1)-2*x(3);
ideal K=-7*x(1)+4*x(3);
HPr(H,K,3);
_[1]=21*x(1)-8*x(3)
\end{verbatim}}}
\noindent and, in particular,
{\small{\begin{verbatim}
HPr(H,H,3);
_[1]=9*x(1)-4*x(3)
\end{verbatim}}}
\end{ex}

\section{Sets of collinear points in ${\mathbb{P}}^2$}
Now we focus on $n=2$ and on sets of at least two collinear points.

The following Corollary is an application  of the  results of the previous section.

\begin{cor}\label{corLhadaL2var}
Let $X,Y$ be two  sets of collinear  points in $\mathbb{P}^2$ such that $X\cup Y $ is contained in a line    $L$ such that $L\cap  \Delta_0\not =\emptyset, $  then $X\hada Y$ are collinear points  contained in the line $L\hada L$.
\end{cor}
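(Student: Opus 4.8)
The plan is to first translate the geometric hypothesis $L\cap\Delta_0\neq\emptyset$ into a normal form for the equation of $L$, and then read off $L\hada L$ and control $X\hada Y$ using the product formulas of the previous section.

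First I would record that in $\mathbb{P}^2$ the set $\Delta_0$ consists precisely of the three coordinate points $[1:0:0]$, $[0:1:0]$, $[0:0:1]$. Writing $L:\{a_0x_0+a_1x_1+a_2x_2=0\}$, the line $L$ passes through the coordinate point $e_k$ if and only if $a_k=0$; hence $L\cap\Delta_0\neq\emptyset$ forces some coefficient $a_k$ to vanish. Thus $L$ is defined by a two-variable equation $a_ix_i+a_jx_j=0$ with $\{i,j\}=\{0,1,2\}\setminus\{k\}$ and $(a_i,a_j)\neq(0,0)$.

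Next I would determine $L\hada L$ according to this normal form. If exactly one of $a_i,a_j$ vanishes, then $L$ is a coordinate hyperplane and Remark \ref{coord} gives that $L\hada L$ is that same coordinate line. If instead $a_ia_j\neq 0$, then Theorem \ref{hyp_2_var} applied with $H=K=L$ shows that $L\hada L$ is the line $\{a_i^2x_i-a_j^2x_j=0\}$. In either case $L\hada L$ is a line.

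Finally, from $X\subseteq L$ and $Y\subseteq L$ it follows directly from the definition of the Hadamard product that $X\hada Y\subseteq L\hada L$: every defined product $P\hada Q$ with $P\in X$ and $Q\in Y$ lies in the image of $L\times L$, hence in its closure $L\hada L$. Since $X\hada Y$ is a finite set contained in the line $L\hada L$, its points are collinear, as claimed. The only thing needing care is the bookkeeping of degenerate products: a particular $P\hada Q$ may be undefined (an element of $\Delta_{-1}$), but by the convention fixed above such a product contributes no point, so the remaining points of $X\hada Y$ are genuinely collinear on $L\hada L$. I do not anticipate any real difficulty; the whole argument rests on recognizing the two-variable form of $L$ and quoting the earlier product computations.
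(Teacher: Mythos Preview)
Your proposal is correct and follows exactly the approach the paper intends: the corollary is stated as an application of the results of the previous section, and you have unpacked precisely those results (Remark~\ref{coord} and Theorem~\ref{hyp_2_var}) after reducing to the two-variable normal form for $L$. The paper gives no explicit proof beyond that remark, so there is nothing further to compare.
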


We need the following technical Lemma, whose proof follows easily from the Definitions:
\begin{lemm}\label{rettadelta1}
Let $L$ be  a line in $\mathbb{P}^2$ such that $L\cap  \Delta_0=\emptyset. $  Then
 \begin{enumerate}

 \item $|L\cap\Delta_1|=3$ and $ |L\cap\Delta_1\cap H_i |=1,$ for all $i\in \{0,1,2\}.$
 \item For all $P,Q\in L\cap\Delta_1,$ we have that $P\hada Q\not\in \Delta_0$ if and only  if $P=Q.$
 \end{enumerate}
\end{lemm}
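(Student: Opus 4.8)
The plan is to unwind the definitions of $\Delta_0$ and $\Delta_1$ in the special case $n=2$ and then reduce everything to the explicit description of the three intersection points of $L$ with the coordinate lines. First I would record that in $\mathbb{P}^2$ one has $\Delta_0=\{[1:0:0],[0:1:0],[0:0:1]\}$, the three coordinate points, and $\Delta_1=H_0\cup H_1\cup H_2$, the union of the three coordinate lines. The hypothesis $L\cap\Delta_0=\emptyset$ then says precisely that $L$ passes through none of the coordinate points; in particular $L\ne H_i$ for each $i$, since each coordinate line contains two coordinate points.

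For part $(1)$, since $L$ and $H_i$ are distinct lines in $\mathbb{P}^2$ they meet in exactly one point, so $L\cap H_i$ is a single point for each $i$; because $H_i\subseteq\Delta_1$ this already gives $L\cap\Delta_1\cap H_i=L\cap H_i$, proving $|L\cap\Delta_1\cap H_i|=1$. To get $|L\cap\Delta_1|=3$ I would write $L\cap\Delta_1=(L\cap H_0)\cup(L\cap H_1)\cup(L\cap H_2)$ and argue that the three points are pairwise distinct: if $L\cap H_i=L\cap H_j$ with $i\ne j$, the common point would lie in $H_i\cap H_j\subseteq\Delta_0$, contradicting $L\cap\Delta_0=\emptyset$.

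For part $(2)$ I would use the same three points, now observing that each lies on exactly one coordinate line (lying on two would again force it into $\Delta_0$), hence has exactly one vanishing coordinate, and the three vanishing positions are distinct. Writing the points as $[0:a_1:a_2]$, $[b_0:0:b_2]$, $[c_0:c_1:0]$ with all displayed entries nonzero, the claim reduces to a short computation of the coordinatewise products. For $P=Q$ the product $P\hada P$ again has a single zero coordinate and so stays outside $\Delta_0$; for two distinct such points the two zero positions are different, so $P\hada Q$ vanishes in two coordinates and hence lands in $\Delta_0$. This yields the equivalence in both directions.

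There is no serious obstacle here; the statement really is bookkeeping once the definitions are unwound. The only points that require a moment's care are the distinctness of the three intersection points in $(1)$ and, in $(2)$, the fact that each of these points has exactly one—rather than merely at least one—zero coordinate, which is exactly what keeps $P\hada P$ out of $\Delta_0$ while sending $P\hada Q$ (for $P\ne Q$) into it.
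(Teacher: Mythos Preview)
Your argument is correct and is exactly what the paper has in mind: the authors state that the proof ``follows easily from the Definitions'' and give no further details, and your write-up simply makes those details explicit. There is nothing to add or compare.
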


\begin{teo}
Let $X,Y\subseteq \mathbb{P}^2$ be two  sets  of   points with $|X|,|Y|\geq 3$,  $|X\cup Y|\geq 4$ and  $X\cup Y\subseteq L,$  where $L$ is a line.   Then the points of  $X\hada Y$  are not collinear if and only if $L\cap  \Delta_0=\emptyset. $
\end{teo}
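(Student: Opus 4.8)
The plan is to prove the two implications separately, using Lemma \ref{rettadelta1} to control the three points of $L$ that lie on the coordinate lines.

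\emph{Only if.} I would argue the contrapositive: if $L\cap\Delta_0\neq\emptyset$, then $X\hada Y$ is collinear. Since $X\cup Y\subseteq L$ with $X,Y$ sets of collinear points, this is exactly the content of Corollary \ref{corLhadaL2var}, which places $X\hada Y$ on the line $L\hada L$. Nothing further is needed for this direction, and the size hypotheses $|X|,|Y|\geq 3$, $|X\cup Y|\geq 4$ are not used here.

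\emph{If.} Assume $L\cap\Delta_0=\emptyset$ and suppose, for a contradiction, that $X\hada Y$ lies on a line $M$. By Lemma \ref{rettadelta1}(1) the set $L\cap\Delta_1$ consists of exactly three points $A_0,A_1,A_2$, where $A_i$ is the unique point of $L$ on $H_i$; every other point of $L$ has all three coordinates nonzero, and I shall call these \emph{general}. Because $L\cap\Delta_0=\emptyset$, the line has an equation $\sum_i c_ix_i=0$ with all $c_i\neq 0$. Since $|X\cup Y|\geq 4$ while $L$ contains only the three non-general points $A_0,A_1,A_2$, there is at least one general point in $X\cup Y$. The key observation is that Hadamard multiplication by a general point $R=[r_0:r_1:r_2]$ is a linear automorphism of $\mathbb{P}^2$ sending $L$ to the line $R\hada L\colon\{\sum_i (c_i/r_i)x_i=0\}$, and that distinct general points give distinct image lines, because proportionality of $(c_0/r_0:c_1/r_1:c_2/r_2)$ and $(c_0/r'_0:c_1/r'_1:c_2/r'_2)$ forces $R=R'$. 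Now if $R\in X$ is general then $R\hada Y\subseteq M\cap(R\hada L)$ contains $|Y|\geq 3$ points, forcing $M=R\hada L$; the same holds for general $R\in Y$ by symmetry. Hence two distinct general points in $X\cup Y$ would give $R\hada L=M=R'\hada L$ with $R\neq R'$, a contradiction.

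It remains to treat the case of a \emph{single} general point $R$, where the automorphism argument no longer produces two competing lines; this is the main obstacle. Here necessarily $X\cup Y=\{A_0,A_1,A_2,R\}$, so, as $|X|,|Y|\geq 3$, each of $X,Y$ omits at most one of these four points, and the hypothesis $|X\cup Y|\geq 4$ prevents $X$ and $Y$ from omitting the same point (which would force $X=Y=\{A_0,A_1,A_2\}$ and $|X\cup Y|=3$). By Lemma \ref{rettadelta1}(2), for $i\neq j$ the product $A_i\hada A_j$ lies in $\Delta_0$, and a coordinate count identifies it as the vertex $e_k$ with $\{i,j,k\}=\{0,1,2\}$; the three vertices $e_0,e_1,e_2$ of $\Delta_0$ are not collinear. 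A short combinatorial check — using that the omitted points of $X$ and $Y$ are distinct — then shows that for every pair $\{A_i,A_j\}$ one of the two lies in $X$ and the other in $Y$, so all three vertices $e_0,e_1,e_2$ belong to $X\hada Y$. This contradicts $X\hada Y\subseteq M$ and completes the argument.
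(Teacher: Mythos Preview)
Your proof is correct and takes a genuinely different route from the paper.

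For the ``only if'' direction you and the paper agree: both invoke Corollary~\ref{corLhadaL2var}.

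For the ``if'' direction, the paper argues directly by a case analysis on how many points of $X$ (and then of $Y$) lie outside $\Delta_1$, in each case exhibiting three or four explicit points of $X\hada Y$ that fail to be collinear, repeatedly appealing to parts of Theorem~\ref{teobocci} and Lemma~\ref{rettadelta1}. You instead argue by contradiction, assuming $X\hada Y\subseteq M$. Your organizing observation is that for any general point $R\in X\cup Y$ the map $Q\mapsto R\hada Q$ is a coordinatewise automorphism carrying $L$ to $R\hada L$, so $R\hada Y$ (or $X\hada R$) already pins down $M=R\hada L$; hence two distinct general points in $X\cup Y$ are immediately contradictory. This collapses most of the paper's casework into one line. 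The residual case of a \emph{single} general point you handle by a short combinatorial argument showing that all three coordinate vertices $e_0,e_1,e_2$ occur as products $A_i\hada A_j$ with $A_i\in X$, $A_j\in Y$; the paper's corresponding subcase instead tracks specific products lying on different coordinate lines $H_i$. Your argument here is tighter, and the verification that ``$X$ and $Y$ omit different points of $\{A_0,A_1,A_2,R\}$'' indeed forces every unordered pair $\{A_i,A_j\}$ to be split between $X$ and $Y$ is straightforward (if $X$ omits $A_i$ then $A_j\in X$ and $Y$ cannot omit $A_i$, so $A_i\in Y$; otherwise $A_i,A_j\in X$ and at least one of them lies in $Y$).

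What each approach buys: the paper's proof is constructive, naming the non-collinear witnesses in every case, which meshes with the surrounding explicit computations. Your contradiction argument isolates a single structural reason (injectivity of $R\mapsto R\hada L$ on general points) that disposes of the generic situation uniformly, leaving only a small boundary case; it is shorter and conceptually cleaner.
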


\begin{proof}
 The necessary part follows from Corollary \ref{corLhadaL2var}.
\par To prove the sufficient part,  first we note that,  by  $(1)$ of Lemma \ref{rettadelta1},   $|\Delta_1\cap L|= 3,$   and so  $X\cup Y\not \subseteq \Delta_1,$  thus    there exists  at least  one point of $X$ or $Y$  not in  $\Delta_1$.
\par Suppose that  there exists a unique point   $P_1\in X$ such that $P_1\not\in\Delta_1,$ hence $X\setminus \{P_1\}\subseteq L\cap \Delta_1.$
 \par Then  either there exists $Q_1\in Y$ with $Q_1\not =P_1$ such that $Q_1\not\in\Delta_1$ or $Y\setminus \{P_1\}\subseteq L\cap \Delta_1.$
\par In the first case  for each $P\in X$ and for each $Q\in Y,$ with $P\not=P_1$ and $Q\not=Q_1,$  we have that  $P\hada Q_1\not=P_1\hada Q_1\not= P_1\hada Q,$ by $ (1)$ of Theorem \ref{teobocci}.
\par Since $P_1\hada L\not= Q_1\hada L$ by   $ (2)$ of Theorem \ref{teobocci}, the points $P\hada Q_1,P_1\hada Q_1, P_1\hada Q$  cannot be collinear.
\par In the second case we can  show that the points of $X\hada (Y\setminus \{P_1\})$ are not collinear. In fact, for every   $Q,Q'\in Y\setminus\{P_1\}$ with $Q\not= Q'$  and for each  $P\in X \setminus\{P_1\},$ we have $Q\hada L\subseteq H_i$ and $Q'\hada L\subseteq H_j,$ with $i\not=j,$ since  $L\cap  \Delta_0=\emptyset. $    Then $P_1\hada Q, P\hada Q \in H_i $ and $P_1\hada Q', P\hada Q'\in H_j. $
Now, we have either  $P\not =Q$ or $P\not= Q', $  whence, by  $ (2)$ of Lemma \ref{rettadelta1}, $P\hada Q\in \Delta_0$ or $P\hada Q'\in \Delta_0 .$ Thus, either $P_1\hada Q\not =P\hada Q\not=P_1\hada Q'$ or $P_1\hada Q\not =P\hada Q'\not=P_1\hada Q',$ for $P_1\hada Q,P_1\hada Q'\not\in \Delta_0.$  On the the other hand, $P_1\hada Q\not=P_1\hada Q'$  by $ (2)$ of Theorem  \ref{teobocci}. Since $P_1\hada Q,P_1\hada Q'\not\in H_i\cap H_j\subset \Delta_0,$ the points $P_1\hada Q, P\hada Q ,P_1\hada Q', P\hada Q' $  cannot be collinear.
\par Now suppose  that  there exist at least  $P_1,P_2\in X$ and $Q_1,Q_2\in Y$, all  distinct,   such that  either $P_1, P_2\not \in \Delta_{1}$ or $Q_1, Q_2\not \in \Delta_1.$
\par We  may assume  that $P_1, P_2\not \in \Delta_{1},$   the other case  being similar.
\par For every   $Q,Q'\in Y,$  with $Q\not=Q'$ we have that $P_1\hada Q\not=P_1\hada Q' $ and $P_2\hada Q\not= P_2\hada Q' $, by   $ (1)$ of Theorem \ref{teobocci}.  Since  $ P_1\hada L\not= P_2\hada L,$ by   $ (1)$ of Theorem \ref{teobocci}, the points $P_1\hada Q, P_1\hada Q' ,P_2\hada Q, P_2\hada Q' $ cannot be all collinear.
  \end{proof}

By Lemma 3.1 of  \cite{BCK}  we have that if $L\subset \mathbb{P}^n$ is a linear subspace of dimension $m$  and $P$ is a point, then $P\hada L$ is either empty or it is a linear subspace of dimension at most $m.$ If $P\not\in \Delta_{n-1}$ then $\dim( P\hada L)=m.$ In the following Theorem  we give a description of  what occurs  in  the plane in some cases.  We will use these technical results later.

\begin{teo}\label{teorettapunto}
Let $L$ be the line in $\mathbb{P}^2$ defined by $a_0x_0+a_1x_1+a_2x_2=0,$ set $A=[a_0:a_1:a_2]\in\mathbb{P}^2$ and let  $Q=[q_0:q_1:q_2]$ be any point.  Then:
\begin{enumerate}
\item if $Q\not\in\Delta_1$, then $Q\hada L$ is the line defined by $\frac{a_0}{q_0}x_0+\frac{a_1}{q_1}x_1+\frac{a_2}{q_2}x_2=0;$
 \item if $Q\in\Delta_1\setminus \Delta_0$ (and so   $Q\in H_j,$ for some $j\in\{ 0,1,2\}$),  $A\in \Delta_i\setminus \Delta_{i-1},$ $(i\in\{ 0,1,2\})$ and  $Q\hada A\in \Delta_{i-1},$  then $Q\hada L= H_j;$
    \item if $Q\in\Delta_1\setminus \Delta_0$ (and so   $Q \in H_j$  for some $j\in\{ 0,1,2\} $) and  we are not in the hypothesis of $(2)$,  then   $Q\hada L$ is the  point of  the intersection  of the line $H_j$ with the line defined by  $a_kq_lx_k+a_lq_kx_l=0,$ with $k,l\not=j;$
  \item if $Q\in\Delta_0$ and $Q\not=A,$ then $Q\hada L=Q$;
  \item if $Q\in\Delta_0$ and $Q=A$, then $Q\hada L$ is not defined.
\end{enumerate}
\end{teo}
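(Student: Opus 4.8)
The plan is to treat the five cases according to the position of $Q$ with respect to the strata $\Delta_0\subset\Delta_1\subset\mathbb{P}^2$, reducing each to an explicit parametrization of $L$ followed by componentwise multiplication by $Q$. Throughout I will use Lemma 3.1 of \cite{BCK} (which guarantees that $Q\hada L$ is empty, a point, or a line), Remark \ref{Hi}, and Theorem \ref{teobocci}.

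For $(1)$, when $Q\not\in\Delta_1=\Delta_{n-1}$ all coordinates $q_i$ are nonzero, so $P\mapsto Q\hada P$ is the automorphism of $\mathbb{P}^2$ given by the invertible diagonal matrix $\mathrm{diag}(q_0,q_1,q_2)$. Writing $y_i=q_ip_i$ and substituting $p_i=y_i/q_i$ into the equation of $L$ immediately yields the stated equation $\frac{a_0}{q_0}x_0+\frac{a_1}{q_1}x_1+\frac{a_2}{q_2}x_2=0$; no hypothesis on $L\cap\Delta_0$ is needed, so this is a mild strengthening of $(2)$ of Theorem \ref{teobocci}.

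For $(2)$ and $(3)$, I would fix the index $j$ with $q_j=0$ (and $q_k,q_l\neq0$, where $\{j,k,l\}=\{0,1,2\}$). By Remark \ref{Hi} we have $Q\hada L\subseteq H_j$, and by Lemma 3.1 of \cite{BCK} it is a single point or the whole line $H_j$. The map $P\mapsto Q\hada P$ restricted to $L$ factors as the linear projection $[p_0:p_1:p_2]\mapsto[p_k:p_l]$ away from the coordinate point $E_j$, followed by the invertible rescaling $[p_k:p_l]\mapsto[q_kp_k:q_lp_l]$ on $H_j\cong\mathbb{P}^1$. Hence $Q\hada L=H_j$ exactly when $E_j\notin L$, i.e. when $a_j\neq0$, and $Q\hada L$ is a point exactly when $a_j=0$; in the latter case the constraint $a_kp_k+a_lp_l=0$ forces the image to be $[q_ka_l:-q_la_k]$, which is precisely $H_j\cap\{a_kq_lx_k+a_lq_kx_l=0\}$, as claimed in $(3)$.

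The main obstacle is to match the intrinsic hypothesis of $(2)$ — namely $A\in\Delta_i\setminus\Delta_{i-1}$ together with $Q\hada A\in\Delta_{i-1}$ — with the clean condition $a_j\neq0$ obtained above. I would do this by a direct count of vanishing coordinates: if $A$ has exactly $2-i$ zero coordinates, then $Q\hada A$ has position $j$ zero (from $q_j=0$) and otherwise vanishes exactly where $A$ does in positions $k,l$, so its number of zero coordinates equals $3-i-[a_j=0]$. Thus $Q\hada A\in\Delta_{i-1}$ (i.e. at least $3-i$ zeros) holds if and only if $a_j\neq0$, with the convention $\Delta_{-1}=\{(0,\dots,0)\}$ covering the degenerate coordinate-point cases. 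This shows the hypothesis of $(2)$ is equivalent to $a_j\neq0$ and that $(3)$ is its exact complement, completing both cases. Finally, for $(4)$ and $(5)$, $Q\in\Delta_0$ is a coordinate point $E_m$, so $Q\hada P=[0:\cdots:q_mp_m:\cdots:0]$ equals $Q$ when $p_m\neq0$ and is undefined when $p_m=0$. Since $L=H_m$ if and only if $A=Q$, when $Q\neq A$ some $P\in L$ has $p_m\neq0$ and $Q\hada L=Q$, giving $(4)$; when $Q=A$ every $P\in L=H_m$ has $p_m=0$, so $Q\hada L$ is undefined, giving $(5)$.
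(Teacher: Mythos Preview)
Your proof is correct and takes a genuinely different route from the paper's own argument.

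For $(1)$, the paper splits into three subcases according to the stratum of $A$ (namely $A\notin\Delta_1$, $A\in\Delta_1\setminus\Delta_0$, $A\in\Delta_0$) and checks each by an explicit parametrization of $L$. You instead observe once and for all that multiplication by $Q\notin\Delta_1$ is a diagonal automorphism of $\mathbb{P}^2$, which dispatches the case uniformly.

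For $(2)$ and $(3)$, the paper fixes $q_0=0$ and then runs through $i=2,1,0$ separately, with further subcases inside $i=1$ and $i=0$; each subcase is handled by writing down a generic point of $L$ and computing $Q\hada P$. Your argument replaces this case analysis by a single geometric observation: the rational map $P\mapsto Q\hada P$ on $\mathbb{P}^2$ is the projection from $E_j$ onto $H_j$ composed with an automorphism of $H_j$, so its restriction to $L$ is surjective onto $H_j$ precisely when $E_j\notin L$, i.e.\ when $a_j\neq 0$. You then show by a neat zero-coordinate count that the stated hypothesis of $(2)$, namely $A\in\Delta_i\setminus\Delta_{i-1}$ and $Q\hada A\in\Delta_{i-1}$, is equivalent to $a_j\neq 0$; this simultaneously identifies $(3)$ as its exact complement. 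The two treatments of $(4)$ and $(5)$ are essentially the same.

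What each approach buys: the paper's case-by-case verification is entirely elementary and makes every instance explicit, at the cost of some repetition. Your projection picture explains \emph{why} the dichotomy in $(2)$/$(3)$ occurs and compresses the argument to a single criterion $a_j\neq 0$, which also makes the somewhat opaque hypothesis ``$Q\hada A\in\Delta_{i-1}$'' transparent.
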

\begin{proof}
\begin {enumerate}
\item [  $ (1)$]
If   $A\not\in\Delta_1$ the result  follows immediately from  $ (2)$ of Theorem \ref{teobocci}.
\par   Now we suppose  $A\in\Delta_1\setminus \Delta_0, $  say   $A\in H_2$, then $L:a_0x_0+a_1x_1=0$.
Let $P\in L,$ then $P=[-\frac{a_1}{a_0}p_1:p_1:p_2]$ and $P\hada Q=[-\frac{a_1}{a_0}q_0p_1:q_1p_1:q_1p_2],$ whence  $Q\hada L$ is contained in the line $a_0q_1x_0+a_1q_0x_1=0$. To see the other inclusion consider  $S=[-\frac{a_1q_0}{a_0q_1}s_1:s_1:s_2].$ Then we have that $S=Q\hada P,$ where
$P=[-\frac{a_1}{a_0q_1}s_1:\frac{s_1}{q_1}:\frac{s_2}{q_2}].$ Since  $a_0(-\frac{a_1}{a_0q_1}s_1)+a_1(\frac{s_1}{q_1})=0,$ then $P\in L.$
  \par   The proof is similar  if we suppose  $a_0=0$ or $a_1=0.$
\par   Finally suppose    $A\in\Delta_0, $ so that $L=H_i,$ for some $i\in\{ 0,1,2\}$. In this case it is easy  to see that $Q\hada L$ is $H_i.$
\item [ $ (2),(3)$]
  Since $Q\in\Delta_1\setminus \Delta_0,$ without loss of generality, we may assume that  $Q=[0:q_1:q_2] $. First  consider the case $i=2,$ i.e. $A\not\in \Delta_1.$ In this case necessarily $Q\hada A\in \Delta_1.$
    Let $P\in L$, then $P=[-\frac{a_1p_1+a_2p_2}{a_0}:p_1:p_2]$ and $P\hada Q=[0:p_1q_1:p_2q_2],$ i.e $Q\hada L=H_0,$ because $q_1q_2\not=0$.
\par   Now suppose  $i=1, $ i.e.  $ A\in\Delta_1\setminus \Delta_0.$ In this case   we need to distinguish
 whether  $Q\hada A\in \Delta_0$ or not.
\par   If $Q\hada A\in \Delta_0,$ then $a_0\not=0$ and  we may  assume  $a_2=0$, i.e.  $L:\{a_0x_0+a_1x_1=0\}.$
\par   Let $P\in L$, then $P=[-\frac{a_1p_1}{a_0}:p_1:p_2]$ and $P\hada Q=[0:p_1q_1:p_2q_2],$ i.e $Q\hada L = H_0 ,$ because $q_1q_2\not=0.$
\par   If $Q\hada A\in \Delta_1\setminus \Delta_0,$ then we may  assume $q_0=a_0=0,$ i.e.  $L:\{a_1x_1+a_2x_2=0\}.$
Let $P\in L,$ then $P=[p_0:-\frac{a_2}{a_1}p_2:p_2]$ and $Q\hada P=[0:-\frac{a_2}{a_1}p_2q_1:p_2q_2]=[0:-\frac{a_2}{a_1}q_1:q_2], $ whence $Q\hada L =[0:-\frac{a_2}{a_1}q_1:q_2]=H_0\cap \{a_1q_2x_1+a_2q_2x_2=0\}. $
\par   Now suppose $i=0,$ i.e.   $A\in \Delta_0,$ then $Q\hada A$  can be defined or not.
\par   If $Q\hada A \in \Delta_{-1},$   then    $L:\{x_0=0\}=H_0$  and  $Q\hada L=H_0.$
\par   If $Q\hada A\not\in \Delta_{-1},$  then we  may assume  $L:\{x_1=0\}=H_1. $  In this case $Q\hada L=\{[0:0:1]\}=H_0\cap \{x_1=0\}.$
\item [ $ (4)$, $ (5)$] They follow immediately from the definition of the Hadamard product.
\end{enumerate}
\end{proof}

\begin{cor}
Let $X$ be a set of collinear points in $\mathbb{P}^2$ and let $Q$ be  a point. Then $Q\hada X$  is  contained in a line.
\end{cor}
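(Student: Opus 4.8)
The plan is to reduce the statement to the single-line case treated in Theorem \ref{teorettapunto}. Since the points of $X$ are collinear, there is a line $L\subset\mathbb{P}^2$, say $L:\{a_0x_0+a_1x_1+a_2x_2=0\}$ with $A=[a_0:a_1:a_2]$, such that $X\subseteq L$. First I would record the inclusion $Q\hada X\subseteq Q\hada L$: every point of $Q\hada X$ is of the form $Q\hada P$ with $P\in X\subseteq L$ (for those $P$ for which the product is defined), and such a point lies in the image of the rational map $\{Q\}\times L\dashrightarrow\mathbb{P}^2$ defining the Hadamard product, hence in its closure $Q\hada L$. Thus it is enough to prove that $Q\hada L$ is contained in a line.

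This last assertion is precisely what Theorem \ref{teorettapunto} provides, since its five cases exhaust the possible positions of $Q$ (relative to $\Delta_1$ and $\Delta_0$) and of $A$. I would simply invoke its conclusions: in cases $(1)$ and $(2)$ the set $Q\hada L$ is a line (the line $\frac{a_0}{q_0}x_0+\frac{a_1}{q_1}x_1+\frac{a_2}{q_2}x_2=0$, respectively the coordinate line $H_j$); in cases $(3)$ and $(4)$ it is a single point; and in case $(5)$ it is undefined, i.e.\ empty. Since a point and the empty set are trivially contained in a line, in every case $Q\hada L$ lies on a line, and therefore so does $Q\hada X$.

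I do not expect any genuine obstacle here, as the hard analysis was carried out in Theorem \ref{teorettapunto}; the corollary is essentially a reformulation. The only points deserving a word of care are the reduction step, namely justifying $Q\hada X\subseteq Q\hada L$ at the level of the closures that define the Hadamard product, and the observation that the finitely many undefined products $Q\hada P\in\Delta_{-1}$ (which can occur only when $Q=P\in\Delta_0$) are simply discarded and do not affect collinearity. Alternatively, one could bypass the case analysis entirely by appealing to Lemma~3.1 of \cite{BCK}, which already guarantees that $Q\hada L$ is either empty or a linear subspace of dimension at most $1$, hence a point or a line.
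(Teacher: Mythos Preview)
Your proposal is correct and matches the paper's intended argument: the Corollary is stated immediately after Theorem \ref{teorettapunto} with no explicit proof, precisely because the reduction $Q\hada X\subseteq Q\hada L$ together with the case analysis of that theorem (yielding a line, a point, or nothing) is the whole content. Your alternative via Lemma~3.1 of \cite{BCK} is also valid and, if anything, slightly cleaner.
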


\begin{teo}\label{retta2punti}
Let $L$ be  a line in $\mathbb{P}^2$ defined by $a_0x_0+a_1x_1+a_2x_2=0,$ set  $A=[a_0:a_1:a_2]$  and let $Q=[q_0:q_1:q_2],Q'=[q_0':q_1':q_2']$ be two  distinct points. Suppose $A, Q, Q'\not\in\Delta_0,$ then:
\begin{enumerate}
\item $Q\hada L$ and $Q'\hada L$ are lines when:
\begin{enumerate}

\item $A\not\in\Delta_1$ and either  $Q\not\in\Delta_1$ or $Q'\not\in\Delta_1.$ The two  lines are distinct;
\item $A\not\in\Delta_1,$ $Q,Q'\in\Delta_1 .$  The two  lines are distinct if and only if $Q\hada Q'\in\Delta_0;$
\item $A\in\Delta_1 ,$ $Q,Q'\not\in\Delta_1.$ The two  lines are distinct  if and only if $det\left(\begin{array}{cc} q_j & q_k\\    q'_j & q'_k\end{array}\right)\not=0$ where $j,k\not=i$ and $A\in H_i;$
\item $A\in\Delta_1 ,$   $Q'\not\in\Delta_1$ and $Q\hada A\in\Delta_0.$  The two  lines are distinct;
\item $A \in\Delta_1, $ $Q\hada A\in\Delta_0$ and $Q'\hada A\in\Delta_0.$  The two  lines are distinct if and only if $Q\hada Q'\in\Delta_0$.

\end{enumerate}
\item $Q\hada L$ is a point and $Q'\hada L$ is a line when:
\begin{enumerate}
\item $A, Q\in\Delta_1 ,$   $Q'\not\in\Delta_1$ and $Q\hada A\not\in\Delta_0.$ The point    $Q\hada L$   belongs to the line $Q'\hada L$     if and only if $det\left(\begin{array}{cc} q_j & q_k\\    q'_j & q'_k\end{array}\right)=0,$ where $j,k\not=i$ and  $A\in H_i;$
\item $A, Q\in\Delta_1 $, $Q\hada A\not\in\Delta_0$ and $Q'\hada A\in\Delta_0.$ The point   $Q\hada L$   does not belong to the line $Q'\hada L.$
\end{enumerate}
\item $Q\hada L$ and $Q'\hada L$ are two distinct points when $A, Q,Q'\in\Delta_1, $  $Q\hada A\not\in\Delta_0 \not\ni Q'\hada A. $
\end{enumerate}
\end{teo}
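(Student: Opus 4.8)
The plan is to first confirm that, under the stated hypotheses, each of $Q\hada L$ and $Q'\hada L$ is genuinely a single point, and only then to compare the two images. The overall strategy is to force $A,Q,Q'$ into a common coordinate line, normalize, and reduce the required distinctness to the distinctness of $Q$ and $Q'$.

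For the first part I would invoke case $(3)$ of Theorem \ref{teorettapunto}. Since $A\in\Delta_1\setminus\Delta_0$ we are in the regime $i=1$ of that theorem, and the extra hypothesis $Q\hada A\notin\Delta_0=\Delta_{i-1}$ says precisely that we are \emph{not} in the situation of its case $(2)$; the same applies to $Q'$ via $Q'\hada A\notin\Delta_0$. Hence both $Q\hada L$ and $Q'\hada L$ are points, not lines.

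The combinatorial heart of the argument is the observation that the conditions $Q\hada A\notin\Delta_0$ and $Q'\hada A\notin\Delta_0$ force $A,Q,Q'$ onto the \emph{same} coordinate line. Indeed, since $A,Q\in\Delta_1\setminus\Delta_0$ each has exactly one vanishing coordinate, say $a_m=0$ and $q_j=0$; because the Hadamard product is coordinate-wise, $Q\hada A$ vanishes exactly in positions $m$ and $j$, so if $m\ne j$ it would have two zero coordinates and thus lie in $\Delta_0$. Therefore $j=m$, i.e. $Q\in H_m$, and likewise $Q'\in H_m$. With this I would normalize, taking $m=0$ so that $L:\ a_1x_1+a_2x_2=0$ with $a_1a_2\ne0$, and $Q=[0:q_1:q_2]$, $Q'=[0:q'_1:q'_2]$ with all displayed coordinates nonzero. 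Applying case $(3)$ of Theorem \ref{teorettapunto} with $j=0$ and $\{k,l\}=\{1,2\}$ (or substituting a general point of $L$ directly) gives $Q\hada L=\{[0:-a_2q_1:a_1q_2]\}$ and symmetrically $Q'\hada L=\{[0:-a_2q'_1:a_1q'_2]\}$; note that by Remark \ref{Hi} both indeed lie on $H_0$. Equality of these two points is then equivalent, after cancelling the nonzero factor $a_1a_2$, to $q_1q'_2-q_2q'_1=0$, that is, to the vanishing of $\det\left(\begin{array}{cc} q_1 & q_2\\ q'_1 & q'_2\end{array}\right)$. But this determinant is exactly the condition for $Q=Q'$ as points of $H_0$, which is excluded since $Q\ne Q'$, so the two points are distinct.

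The only genuinely delicate point is the middle step. The cancellation results of Theorem \ref{teobocci} are unavailable here because $A\in\Delta_1=\Delta_{n-1}$, so distinctness of the images cannot be obtained abstractly; it must instead be extracted from the explicit coordinates, where it is seen to coincide precisely with the distinctness of $Q$ and $Q'$ on the coordinate line $H_m$. Getting the zero-coordinate bookkeeping right—so as to legitimately reduce to this normalized configuration—is the main obstacle, after which the computation is routine.
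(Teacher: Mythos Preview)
Your argument is correct as far as it goes, and for case~$(3)$ it is essentially the paper's own proof: both normalize to $A=[0:a_1:a_2]\in H_0$, invoke case~$(3)$ of Theorem~\ref{teorettapunto} to obtain $Q\hada L=[0:-a_2q_1:a_1q_2]$ and $Q'\hada L=[0:-a_2q'_1:a_1q'_2]$, and conclude distinctness from the nonvanishing of $\det\left(\begin{smallmatrix} q_1 & q_2\\ q'_1 & q'_2\end{smallmatrix}\right)$. Your explicit justification that $Q$ and $Q'$ must lie on the \emph{same} coordinate line $H_0$ as $A$ (because otherwise $Q\hada A$ would have two zero coordinates) is a nice clarification of a step the paper leaves implicit.

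However, the statement you were asked to prove is the full Theorem~\ref{retta2punti}, which has eight sub-cases: $(1)$(a)--(e), $(2)$(a)--(b), and $(3)$. Your proposal addresses only case~$(3)$. The remaining seven cases are not trivial consequences of what you have written: they involve different hypotheses on the positions of $A,Q,Q'$ relative to $\Delta_1$, invoke different parts of Theorems~\ref{teobocci} and~\ref{teorettapunto}, and in several instances ($(1)$(b), $(1)$(c), $(1)$(e), $(2)$(a)) yield genuine ``if and only if'' criteria rather than unconditional distinctness. For example, in $(1)$(c) one needs the explicit equations $Q\hada L:\{a_1q_2x_1+a_2q_1x_2=0\}$ and $Q'\hada L:\{a_1q'_2x_1+a_2q'_1x_2=0\}$ from case~$(1)$ of Theorem~\ref{teorettapunto} to see that coincidence is equivalent to the vanishing of the $2\times 2$ determinant; in $(2)$(b) one must check that the point $[0:-a_2q_1:a_1q_2]$ does not lie on the coordinate line $H_i$ obtained as $Q'\hada L$. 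Each of these requires its own short verification, and none of them is covered by your argument.
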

\begin{proof}
\begin{enumerate}
\item [  $ (1)$-(a)]
If  $Q,Q'\not\in\Delta_1$ then $Q\hada L$ and $Q'\hada L$ are two  distinct lines by  $ (2)$ of Theorem \ref{teobocci}.
\par   If $Q\in\Delta_1 $ and $Q'\not\in\Delta_1,$  then, by  $ (2)$ of Theorem \ref{teorettapunto}, $Q\hada L=H_i,$ for some $i\in\{ 0,1,2\},$  which is a line   distinct from $Q'\hada L$.
\item [  $ (1)$-(b)] Since  $Q,Q'\in\Delta_1\setminus \Delta_0$, then,  by  $ (2)$ of Theorem \ref{teorettapunto}, $Q\hada L=H_i$ and $Q'\hada L=H_j,$ for some $i,j\in\{ 0,1,2\}.$  Clearly, these lines are distinct if and only if $Q\hada
Q'\in\Delta_0$.
\end{enumerate}
\bigskip
In the rest of the proof we have $A\in\Delta_1\setminus \Delta_0$  and so,  without loss of generality, we may assume  $A=[0:a_1:a_2]\in H_0,$ with $a_1a_2\not=0,$   whence $L:\{a_1x_1+a_2x_2=0\}$.
\begin{enumerate}
\item [  $ (1)$-(c)]
Since  $Q,Q'\not\in\Delta_1,$ then, by   $ (1)$ of Theorem \ref{teorettapunto}, $Q\hada L:\{a_1q_2x_1+a_2q_1x_2=0\}$ and $Q'\hada  L :\{a_1q'_2x_1+a_2q'_1x_2=0\}$ are  lines.
Clearly, these lines are distinct if and only if $det\left(\begin{array}{cc} q_1 & q_2\\    q'_1 & q'_2\end{array}\right)\not=0$.
\item [  $ (1)$-(d)] Since $Q'\not\in\Delta_1,$ then $Q'\hada L:\{a_1q'_2x_1+a_2q'_1x_2=0\}, $ by   $ (1)$ of Theorem \ref{teorettapunto}.
  Since $Q\hada A\in\Delta_0,$ then  necessarily $Q\in\Delta_1, $  and so, by  $ (2)$ of Theorem \ref{teorettapunto}, $Q\hada L=H_i,$  for some $i\in\{ 1,2\}.$  These two lines are distinct because $Q'\not\in \Delta_1.$

\item [  $ (1)$-(e)] Since $Q\hada A, Q'\hada A\in\Delta_0, $ then  necessarily $Q, Q'\in\Delta_1, $ and so by   $ (2)$ of Theorem \ref{teorettapunto},  $Q\hada L=H_i$ and $Q'\hada L=H_j, $  for some $i\in\{ 1,2\}.$   Clearly, these lines  are distinct lines if and only if $Q\hada Q'\in\Delta_0$.

\item [ $ (2)$-(a)] Since $Q\hada A\not\in\Delta_0,$ then, by  $ (3)$  of Theorem \ref{teorettapunto}, $Q\hada L =[0:-a_2q_1:a_1q_2],$  which  belongs to the line $Q'\hada L: \{a_1q'_2x_1+a_2q'_1x_2=0\}$ if and only if
$det\left(\begin{array}{cc} q_1 & q_2\\    q'_1 & q'_2\end{array}\right)=0$.

\item [ $ (2)$-(b)]
  Since $Q\hada A\not\in\Delta_0$ and $Q'\hada A\in\Delta_0,$ then, by  $ (2)$, $ (3)$  of Theorem \ref{teorettapunto}, $Q\hada L= [0:-a_2q_1:a_1q_2]$ and $Q'\hada L=H_i$ for some $i\in\{ 1,2\}.$ Clearly, the point does not belong to the line.
\item [ $ (3)$] Since $Q\hada A, Q'\hada A\not\in\Delta_0$ and  $A, Q,Q'\in\Delta_1, $  then, by  $ (3)$  of Theorem \ref{teorettapunto}, $Q\hada L= [ 0:-a_2q_1:a_1q_2],$  and $Q' \hada L =[0:-a_2q'_1:a_1q'_2].$ These two points are distinct    because $Q\not=Q'$  and $ Q,Q'\in\Delta_1 $ imply $det\left(\begin{array}{cc} q_1 & q_2\\    q'_1 & q'_2\end{array}\right)\not=0.$
\end{enumerate}
\end{proof}

\begin{oss}\label{minors}
{\em Let $L$ be  a  line in $\mathbb{P}^2$  such that $L\cap \Delta_0=\emptyset ,$  and let  $P,Q\in L  ,$ with $P\not=Q.$ Then the minors of order two
  of the matrix $\left(\begin{array}{c} P \\   Q\end{array}\right)$ are all not zero.}
\end{oss}

\begin{teo}\label{rettanpunti}
Let $L$ be  a line in $\mathbb{P}^2$  defined by  $a_0x_0+a_1x_1+a_2x_2=0$  and let  $A=[a_0:a_1:a_2].$ Let $L'$ be a line in $\mathbb{P}^2$ defined by  $a'_0x_0+a'_1x_1+a'_2x_2=0$ and such that $ [a'_0:a'_1:a'_2]\not\in\Delta_1.$ Let  $X'\subseteq L'$ be  a set of $r$ collinear points and suppose  $L'\cap \Delta_1\subseteq X'.$    Then:
\begin{enumerate}
\item If $A\not\in\Delta_1$, then $X'\hada L$ is  a set of $r$ distinct lines.
\item If $A\in\Delta_1\setminus \Delta_0,$ then $X'\hada L$ is  a set  of $r-1$  distinct lines and a point which does not belong to any line of $X'\hada L.$
     \item If $A\in\Delta_0,$ then $X'\hada L=L.$
\end{enumerate}
\end{teo}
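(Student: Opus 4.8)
The plan is to reduce everything to the pointwise classification of Theorem \ref{teorettapunto}, since $X'\hada L=\bigcup_{Q\in X'}(Q\hada L)$. First I would observe that the hypothesis $[a'_0:a'_1:a'_2]\notin\Delta_1$ is equivalent to $L'\cap\Delta_0=\emptyset$, so by $(1)$ of Lemma \ref{rettadelta1} the set $L'\cap\Delta_1$ consists of exactly three points $Q_0,Q_1,Q_2$ with $Q_j\in H_j\cap(\Delta_1\setminus\Delta_0)$; these lie in $X'$ by assumption, while the remaining $r-3$ points of $X'$ have all coordinates nonzero, i.e.\ lie off $\Delta_1$. I then treat the three boundary points and the interior points separately, applying the appropriate item of Theorem \ref{teorettapunto} to each and counting the resulting lines.

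For $(1)$, where $A\notin\Delta_1$ (equivalently $L\cap\Delta_0=\emptyset$), each interior $Q$ gives by $(1)$ of Theorem \ref{teorettapunto} a line whose equation has no zero coefficient, while each $Q_j$ satisfies $Q_j\hada A\in\Delta_1$ and hence by $(2)$ of Theorem \ref{teorettapunto} gives $Q_j\hada L=H_j$; this yields $r$ lines in total. Distinctness of the interior lines follows from $(2)$ of Theorem \ref{teobocci}, applicable precisely because $L\cap\Delta_0=\emptyset$; they differ from $H_0,H_1,H_2$ since their coefficients are all nonzero, and the $H_j$ are pairwise distinct.

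For $(2)$ I may assume $A=[0:a_1:a_2]\in H_0$ with $a_1a_2\neq0$ and $L:a_1x_1+a_2x_2=0$. A direct computation of $Q_j\hada A$ sorts the boundary points: $Q_0=[0:q_1:q_2]$ gives $Q_0\hada A\in\Delta_1\setminus\Delta_0$, so by $(3)$ of Theorem \ref{teorettapunto} the set $Q_0\hada L$ is the single point $[0:-a_2q_1:a_1q_2]$, whereas $Q_1\hada A,Q_2\hada A\in\Delta_0$ give $Q_1\hada L=H_1$ and $Q_2\hada L=H_2$ by $(2)$ of Theorem \ref{teorettapunto}; each interior $Q$ gives the line $a_1q_2x_1+a_2q_1x_2=0$ by $(1)$. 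The key observation is that all these lines—the interior ones together with $H_1$ and $H_2$—pass through $[1:0:0]$, so they form a pencil and distinctness reduces to distinctness of directions: for two interior lines this is the non-vanishing of $\det\left(\begin{smallmatrix}q_1&q_2\\q'_1&q'_2\end{smallmatrix}\right)$ for distinct points of $X'$, which holds by Remark \ref{minors}, and it is clear for $H_1,H_2$ since their equations involve different variables. Finally, since $Q_0\hada L=[0:-a_2q_1:a_1q_2]$ has both of its last two coordinates nonzero it lies on neither $H_1$ nor $H_2$, and the same minors show it lies on no interior line; this gives $r-1$ distinct lines and a point off all of them.

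For $(3)$, where $A\in\Delta_0$, I may take $L=H_0$; then $Q\hada L\subseteq H_0$ for every $Q$ by Remark \ref{Hi}, so $X'\hada L\subseteq L$, while $Q_0=L'\cap H_0=[0:q_1:q_2]$ with $q_1q_2\neq0$ already gives $Q_0\hada L=H_0$, since $[p_1:p_2]\mapsto[q_1p_1:q_2p_2]$ is onto; hence $X'\hada L=L$. The main obstacle is the bookkeeping in Case $(2)$: correctly deciding, for each of the three boundary points, whether item $(2)$ or $(3)$ of Theorem \ref{teorettapunto} applies, and then packaging both the distinctness of the pencil and the separation of the extra point uniformly through the minor condition of Remark \ref{minors}.
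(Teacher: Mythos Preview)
Your proof is correct and follows essentially the same strategy as the paper: split $X'$ into the three boundary points $L'\cap\Delta_1$ and the $r-3$ interior points, then apply the appropriate case of Theorem~\ref{teorettapunto} to each and verify distinctness via Theorem~\ref{teobocci}, Theorem~\ref{retta2punti}, and Remark~\ref{minors}. Your pencil observation in case~$(2)$ (that all the lines pass through $[1:0:0]$) is a nice packaging that the paper does not make explicit, and you are slightly more careful than the paper in checking that the extra point also avoids the two coordinate lines $H_1,H_2$, but the underlying argument is the same.
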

\begin{proof}
\begin{enumerate}
\item [  $ (1)$]  Since  $A\not\in\Delta_1,$  then    $\{P'\hada L \vert P'\in X'\setminus \Delta_1\}$ is a set of  $r-3$ distinct lines. In fact,  by   $ (1)$ of Lemma  \ref{rettadelta1}, $|L\cap\Delta_1|=3,$ and by  $ (2)$ of Theorem \ref{teobocci}, each $P'\hada L$ is a line and these lines are all distinct.
\par   By $ (2)$ of  Theorem \ref{teorettapunto}, if $P'\in X'\cap \Delta_1,$ then $P'\hada L= H_i$ (for some $i\in \{0,1,2\}$) and they are  distinct by   $ (1)$-(b) of Theorem \ref{retta2punti};
\item  [ $ (2)$] Since  $A\in\Delta_1\setminus \Delta_0, $  then    $\{P'\hada L \vert P'\in X'\setminus\Delta_1\}$ is a set of  $r-3$ distinct lines. In fact, by   $ (1)$ of Theorem \ref{teorettapunto},  each $P'\hada L$ is a line and it is easy to prove  that  they are all distinct by using Remark \ref{minors}.
\par   Since $L'\cap \Delta_1\subseteq X',$ by   $ (1)$ of Lemma  \ref{rettadelta1} there  exists only one point $P_1'\in X'\cap \Delta_1$ such  that $P_1'\hada A\not\in\Delta_0.$  Thus, by  $ (3)$ of Theorem \ref{teorettapunto},  $P_1'\hada L$ is a point which does not belong to any of the lines of  $\{P'\hada L \vert P'\in X'\setminus \Delta_1\}$  by  $ (2)$-(a) of Theorem \ref{retta2punti} in view of Remark \ref{minors}. For  the remaining two points $P_2',P_3'\in X'\setminus \Delta_1$  we have $P_2'\hada L=H_i$ and  $P_3'\hada L =H_j,$ for some $i,j\in \{0,1,2\},$  with $i\not=j.$
\item  [ $ (3)$]Since $A\in\Delta_0, $ then $L=H_i,$  for some $i\in \{0,1,2\}.$ By Remark \ref{Hi}  $X'\hada L\subseteq L. $ On the other hand, from   $L'\cap \Delta_1\subseteq X' , $    $ [a'_0:a'_1:a'_2]\not\in\Delta_1 $ and    $ (1)$ of Lemma  \ref{rettadelta1}, it follows that  there exists a point $P'\in( X'\cap \Delta_1)\setminus \Delta_0,$ such that $P'\hada A\in \Delta_{-1}.$ The conclusion follows from  $ (2)$ of Theorem \ref{teorettapunto}.
 \end{enumerate}
\end{proof}

 \begin{cor}\label{distinctlines}
Let $L$ be  a line in $\mathbb{P}^2$  defined by  $a_0x_0+a_1x_1+a_2x_2=0$  and let  $A=[a_0:a_1:a_2].$ Let $L'$ be a line in $\mathbb{P}^2$ defined by  $a'_0x_0+a'_1x_1+a'_2x_2=0$ and such that $ [a'_0:a'_1:a'_2]\not\in\Delta_1.$ Let  $X'\subseteq L'$ be  a set of $r$ collinear points.     Then:
\begin{enumerate}
\item If $A\not\in\Delta_1$, then $X'\hada L$ is  a set of $r$ distinct lines;
\item If $A\in\Delta_1\setminus \Delta_0,$ then $X'\hada L$ is either a set of $r$ distinct lines or  a set  of $r-1$  distinct lines and a point which does not belong to any line of $X'\hada L;$
     \item If $A\in\Delta_0$ and $r\geq 3,$ then $X'\hada L=L.$
\end{enumerate}
\end{cor}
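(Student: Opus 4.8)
The plan is to reduce parts (1) and (2) to Theorem \ref{rettanpunti} by enlarging $X'$ to a set meeting its hypothesis, and to treat part (3) by a direct argument where this reduction is unavailable. I first record the standing normalization: by Lemma \ref{rettadelta1}(1) the hypothesis $[a'_0:a'_1:a'_2]\notin\Delta_1$ is equivalent to $L'\cap\Delta_0=\emptyset$, so $L'$ meets $\Delta_1$ in exactly three points, one on each $H_i$. Set $\widehat{X}'=X'\cup(L'\cap\Delta_1)$; then $L'\cap\Delta_1\subseteq\widehat{X}'$, so Theorem \ref{rettanpunti} applies to $\widehat{X}'$, and I will read off $X'\hada L$ as the subcollection of $\widehat{X}'\hada L$ indexed by $X'$, using that $X'\hada L=\bigcup_{Q\in X'}(Q\hada L)$.

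For part (1), with $A\notin\Delta_1$, Theorem \ref{rettanpunti}(1) says that $\widehat{X}'\hada L$ is a set of $|\widehat{X}'|$ \emph{distinct} lines, i.e.\ $Q\mapsto Q\hada L$ is injective on $\widehat{X}'$; restricting this injection to $X'$ gives exactly $r$ distinct lines. For part (2), with $A\in\Delta_1\setminus\Delta_0$ and $A\in H_i$, Theorem \ref{rettanpunti}(2) says $\widehat{X}'\hada L$ is a set of distinct lines together with a single point, and its proof identifies that point as $P_1'\hada L$, where $P_1'$ is the unique element of $L'\cap\Delta_1$ with $P_1'\hada A\notin\Delta_0$; a short check shows $P_1'=L'\cap H_i$, so $P_1'$ is determined by $L$ and $L'$ alone, independently of $X'$. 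The dichotomy on whether $P_1'\in X'$ then yields precisely the two alternatives asserted: if $P_1'\in X'$, then $X'\hada L$ inherits the point and $r-1$ of the distinct lines (with the point lying on none of them, again by Theorem \ref{rettanpunti}, which rests on Theorem \ref{retta2punti} and Remark \ref{minors}); if $P_1'\notin X'$, then every $Q\in X'$ maps to a line, so $X'\hada L$ is a set of $r$ distinct lines.

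Part (3), with $A\in\Delta_0$, is where the enlargement gives only the trivial inclusion $X'\hada L\subseteq\widehat{X}'\hada L=L$, so I argue directly. Writing $L=H_i$, Remark \ref{Hi} gives $X'\hada L\subseteq H_i=L$, and it suffices to produce one $Q\in X'$ with $Q\hada L=L$. If $X'\not\subseteq\Delta_1$ --- which is automatic once $r\geq 4$, since $|L'\cap\Delta_1|=3$ --- then any $Q\in X'\setminus\Delta_1$ satisfies $Q\hada L=L$ by Theorem \ref{teorettapunto}(1), the defining equation of that line collapsing to $x_i=0$ because $A\in\Delta_0$. Otherwise $X'\subseteq\Delta_1$ together with $r\geq 3$ forces $X'=L'\cap\Delta_1$, which contains $L'\cap H_i$; for $Q=L'\cap H_i$ one has $Q\hada A\in\Delta_{-1}$, whence $Q\hada L=H_i=L$ by Theorem \ref{teorettapunto}(2). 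In either case $L\subseteq X'\hada L$, and the two inclusions give $X'\hada L=L$.

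The main obstacle is part (3), and specifically seeing that $r\geq 3$ is the sharp hypothesis. The delicate configuration is $r=3$ with $X'=L'\cap\Delta_1$: no point then lies off $\Delta_1$, so Theorem \ref{teorettapunto}(1) is unavailable and one must instead single out $L'\cap H_i$ and exploit the degenerate product $Q\hada A\in\Delta_{-1}$ in Theorem \ref{teorettapunto}(2). Sharpness is seen from $r=2$: taking $X'$ to be the two points of $L'\cap\Delta_1$ off $H_i$, each maps to a point rather than to all of $L$, so $X'\hada L\neq L$.
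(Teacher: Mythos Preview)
Your proof is correct and follows essentially the same route as the paper. For (1) and (2) the paper simply says that the proof of Theorem \ref{rettanpunti} goes through verbatim except that the point $P_1'$ need not be present in $X'$; your enlargement to $\widehat{X}'=X'\cup(L'\cap\Delta_1)$ and subsequent restriction is a clean formalization of the same observation. For (3) your case split (some $Q\in X'\setminus\Delta_1$ versus $X'=L'\cap\Delta_1$) is exactly the paper's argument, though you spell out the final step via Theorem \ref{teorettapunto}(2) where the paper implicitly falls back on Theorem \ref{rettanpunti}(3); your added sharpness remark for $r=2$ is a nice bonus not in the paper.
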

\begin{proof}
The only difference with the previous Theorem  is that we no longer have  the hypothesis $L'\cap \Delta_1\subseteq X',$ and so the existence of $P'\in X'\cap\Delta_1$ such that $P'\hada A\not\in\Delta_0$  is not granted, therefore we can obtain $r$ lines and no extra point.
\par   As for  $ (3)$, if there exists $P'\in X'\setminus \Delta_1,$ we are done by   $ (1)$ of Theorem \ref{teorettapunto}. If every $P'\in X'$ is in $\Delta_1$, then, in view of   $ (1)$ of Lemma  \ref{rettadelta1}, we have $r=3$ and  $L'\cap \Delta_1= X'.$
\end{proof}

\begin{ex}\rm
Let $L'\subset {\mathbb{P}}^2$ be the line of equation $2x_0-3x_1+132x_2$ and let $X'\subset L'$ be the following set of five points (randomly chosen in $L'$ by \texttt{Singular})
{\small{\[
X'=\left\{ [27:238:5],[12:96:2],[15:142:3],[21:234:5],[33:242:5]\right\}
\]}}
 After setting $X'=Y,$  we get that the ideal $I$ of $Y$ is generated by I[1] and I[2], where:
{\small{\begin{verbatim}
I[1]=2*x(0)-3*x(1)+132*x(2)
I[2]=375*x(1)^5-89300*x(1)^4*x(2)+8505840*x(1)^3*x(2)^2+
       -405077872*x(1)^2*x(2)^3+9645291984*x(1)*x(2)^4-
        -91862394624*x(2)^5
\end{verbatim}}}

As $L$ consider the line $2x_0-3x_1-11x_2$; clearly we are in the case $A\not\in\Delta_1$.
Computing the Hadamard product $L\hada X'$, in \texttt{Singular} we get
{\small{\begin{verbatim}
ideal J=2*x(0)-3*x(1)-11*x(2);
ideal YL=HPr(I,J,2);
degree(YL);
// dimension (proj.)  = 1
// degree (proj.)   = 5
genus(YL);
-4
\end{verbatim}}}
\noindent which tell us that $L\hada X'$ is the union of five lines. In particular, looking at the primary decomposition of the ideal \texttt{YL} we recover the five lines
\[
\begin{array}{l}
16x_0-3x_1-528x_2=0\\
284x_0-45x_1-7810x_2=0\\
2380x_0-405x_1-70686x_2=0\\
260x_0-35x_1-6006x_2=0\\
220x_0-45x_1-7986x_2=0
\end{array}
\]

\end{ex}

\begin{lemm}\label{lemmpuntiugualihada}
Let   $L$ be the line  defined by  $a_0x_0+a_1x_1+a_2x_2=0$ and $L'$ defined by $a'_0x_0+a'_1x_1+a'_2x_2=0,$ let $A=[a_0:a_1:a_2]$ and $A'=[a'_0:a'_1:a'_2]$ with $A,A'\not\in\Delta_1.$ Let  $P_1,P_2\in L\setminus \Delta_1$ and $P_1', P_2'\in L'\setminus
\Delta_1$  with $\{P_1,P_2\}\cap \{P_1', P_2'\}=\emptyset.$  If $P_1\hada P_1'=P_2\hada P_2',$ then either $P_1=P_2$ and $P_1'=P_2'$ or $P_i\hada A=P_j'\hada A'$ for  $i,j\in\{1,2\}$  with $i\not=j.$
\end{lemm}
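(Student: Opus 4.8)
The plan is to work entirely with affine representatives and reduce the whole statement to one linear-algebra fact about a $3\times 3$ configuration. Since $A,A'\notin\Delta_1$ and $P_1,P_2,P_1',P_2'\notin\Delta_1$, every coordinate $a_i,a_i',p_i,r_i,p_i',r_i'$ is nonzero, and this is the feature I exploit throughout. Writing $P_1=[p_0:p_1:p_2]$, $P_2=[r_0:r_1:r_2]$, $P_1'=[p_0':p_1':p_2']$, $P_2'=[r_0':r_1':r_2']$, the hypothesis $P_1\hada P_1'=P_2\hada P_2'$ means there is $\lambda\neq 0$ with $p_ip_i'=\lambda\,r_ir_i'$ for $i=0,1,2$. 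Setting $s_i=p_i/r_i$ and $t_i=p_i'/r_i'$ (all defined and nonzero), this reads $s_it_i=\lambda$ for every $i$.

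First I would dispose of the degenerate alternative. Because $s_it_i=\lambda$ is constant, the $s_i$ are all equal if and only if the $t_i$ are all equal; equivalently $P_1=P_2$ if and only if $P_1'=P_2'$. Hence the first case of the statement is exactly the case in which $(s_0,s_1,s_2)$ is proportional to $(1,1,1)$, and the remaining work is confined to the case where the $s_i$ are not all equal.

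For that case I would encode the four incidence conditions as two homogeneous linear systems sharing the same coefficients. Put $\alpha_i=a_ir_i$ and $\beta_i=a_i'r_i'$, both nonzero. From $P_2\in L$ and $P_1\in L$ (using $p_i=s_ir_i$) one gets $\sum_i\alpha_i=0$ and $\sum_i s_i\alpha_i=0$. Setting $\gamma_i=\beta_i/s_i$, the conditions $P_2'\in L'$ and $P_1'\in L'$ (using $t_i=\lambda/s_i$) become $\sum_i s_i\gamma_i=0$ and $\sum_i\gamma_i=0$. Thus both nonzero vectors $(\alpha_i)$ and $(\gamma_i)$ lie in the common kernel of the functionals $(1,1,1)$ and $(s_0,s_1,s_2)$. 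When the $s_i$ are not all equal these functionals are independent, so that kernel is one-dimensional and $(\alpha_i),(\gamma_i)$ must be proportional: $\beta_i=c\,s_i\alpha_i$ for some $c\neq 0$. This says precisely that $s_i\alpha_i/\beta_i$ is constant in $i$, i.e. $P_1\hada A=[s_0\alpha_0:s_1\alpha_1:s_2\alpha_2]=[\beta_0:\beta_1:\beta_2]=P_2'\hada A'$, which is the case $i=1,j=2$ of the conclusion.

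The only genuine subtlety is the bookkeeping that makes the two systems share the same functionals: the substitution $t_i=\lambda/s_i$ coming from the hypothesis is exactly what converts the incidence conditions for $L'$ into conditions on $(\gamma_i)$ with the coefficients $1$ and $s_i$, rather than $1$ and $t_i$. It is also worth recording at the outset that, under $s_it_i=\lambda$, the two candidate conclusions $P_1\hada A=P_2'\hada A'$ and $P_2\hada A=P_1'\hada A'$ are equivalent, each amounting to the constancy of $s_i\alpha_i/\beta_i$, so establishing either one suffices. Everywhere the hypotheses $A,A',P_k,P_k'\notin\Delta_1$ enter only to guarantee that every quantity appearing in a denominator is nonzero and that $(\alpha_i),(\gamma_i)$ are honestly nonzero vectors, which is what lets me pass from membership in a one-dimensional space to genuine proportionality.
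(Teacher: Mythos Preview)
Your proof is correct and proceeds by a genuinely different route than the paper's. The paper argues geometrically: assuming $P_1\neq P_2$ and $P_1'\neq P_2'$, it observes that the two points $P_1\hada P_2'$ and $P_1\hada P_1'=P_2\hada P_2'$ are distinct, and that both the line $P_1\hada L'$ (with equation $\sum_i \frac{a_i'}{p_{1i}}x_i=0$) and the line $P_2'\hada L$ (with equation $\sum_i \frac{a_i}{p'_{2i}}x_i=0$) pass through them; hence these two lines coincide, and comparing coefficients gives $P_1\hada A=P_2'\hada A'$ directly. Your approach instead packages the four incidence conditions $P_1,P_2\in L$, $P_1',P_2'\in L'$ as two nonzero vectors $(\alpha_i)$, $(\gamma_i)$ lying in the common kernel of $(1,1,1)$ and $(s_0,s_1,s_2)$, and then reads the conclusion off the proportionality forced by the one\nobreakdash-dimensionality of that kernel. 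The paper's argument has the advantage that it names the geometric fact being used (coincidence of $P_1\hada L'$ and $P_2'\hada L$), which is exactly what gets reused in Theorem~\ref{grid}; your argument is entirely self\nobreakdash-contained and does not invoke the description of $P\hada L$ from Theorem~\ref{teobocci} or Theorem~\ref{teorettapunto}. Your side observation that, under $s_it_i=\lambda$, the conclusions $P_1\hada A=P_2'\hada A'$ and $P_2\hada A=P_1'\hada A'$ are equivalent is also correct and worth stating.
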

\begin{proof}
If $P_1=P_2, $ then because $P_1,P_2\not\in\Delta_1$ and $P_1\hada P_1'=P_2\hada P_2'$, then $P_1'=P_2',$ by   $ (1)$ of Theorem  \ref{teobocci}.
\par   Suppose $P_1=[p_{10}:p_{11}:p_{12}]\not=P_2=[p_{20}:p_{21}:p_{22}]$ and $P_1'=[p_{10}':p_{11}':p_{12}']\not=P_2'=[p_{20}':p_{21}':p_{22}'].$  Since   $P_1'\not=P_2', $  we have $P_1\hada P_2'\not=P_1\hada P_1'=P_2\hada P_2'. $ Through  $P_1\hada P_2'$ and $P_1\hada P_1'$  there is
only   the line $P_1\hada L'$  defined by $\frac{a'_0}{p_{10}}x_0+\frac{a'_1}{p_{11}}x_1+\frac{a'_2}{p_{12}}x_2=0$, and through  $P_1\hada P_2'$ and
$P_2\hada P_2'$  there is only  the line $P_2'\hada L$ defined by $\frac{a_0}{p_{20}'}x_0+\frac{a_1}{p_{21}'}x_1+\frac{a_2}{p_{22}'}x_2=0.$ Since  $P_1\hada P_1'=P_2\hada P_2',$ these two lines must coincide,  i.e.
$\frac{a'_i}{p_{1i}}=\alpha\frac{a_i}{p_{2i}'},$ for $i\in\{0,1,2\}, $  which gives  $P_1\hada A=P_2'\hada A'$.
\end{proof}

\begin{teo}\label{grid}
Let $X$   be a set of $n$  collinear points, with  $X\subseteq L\setminus \Delta_1 $ and $L$ defined by $a_0x_0+a_1x_1+a_2x_2=0.$ Let $X'$   be a set of $m$  collinear points, with  $X'\subseteq L'\setminus \Delta_1 $ and $L'$ defined by $a'_0x_0+a'_1x_1+a'_2x_2=0.$ Set  $A=[a_0:a_1:a_2]$ and $A'=[a'_0:a'_1:a'_2].$
If  $A,A'\not\in\Delta_1$ and $X\cap X'=\emptyset,$ then, $X\hada X'$ is the $nm$ element grid $(X\hada
L')\cap(X'\hada L)$  if and only if $P\hada A\not=P'\hada A',$ for all $P\in X$ and all  $P'\in X'.$
\end{teo}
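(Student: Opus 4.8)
The plan is to combine two results already available: the explicit description of $P\hada L'$ as a line (Theorem \ref{teorettapunto}(1)) and the rigidity of coincidences $P_1\hada P_1'=P_2\hada P_2'$ encoded in Lemma \ref{lemmpuntiugualihada}. First I would set up the basic geometry. Since $X\subseteq L\setminus\Delta_1$ and $A'\not\in\Delta_1$, for each $P\in X$ the set $P\hada L'$ is a line, and by Theorem \ref{teobocci}(2) the $n$ lines $\{P\hada L'\mid P\in X\}$ are pairwise distinct; symmetrically $\{P'\hada L\mid P'\in X'\}$ is a family of $m$ distinct lines. Moreover, for every $P\in X$ and $P'\in X'$ one has $P\hada P'\in (P\hada L')\cap(P'\hada L)$, because $P'\in L'$ gives $P\hada P'\in P\hada L'$ and, by symmetry of $\hada$, $P\in L$ gives $P\hada P'=P'\hada P\in P'\hada L$. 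Thus the inclusion $X\hada X'\subseteq (X\hada L')\cap(X'\hada L)$ holds unconditionally.

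The key translation, which I would isolate next, is that the two lines $P\hada L'$ and $P'\hada L$ coincide \emph{if and only if} $P\hada A=P'\hada A'$. Writing $P=[p_0:p_1:p_2]$, $P'=[p'_0:p'_1:p'_2]$, the equations from Theorem \ref{teorettapunto}(1) show these lines agree exactly when $\frac{a'_i}{p_i}=\lambda\frac{a_i}{p'_i}$ for all $i$ and some $\lambda$, i.e. when $[a'_0p'_0:a'_1p'_1:a'_2p'_2]=[a_0p_0:a_1p_1:a_2p_2]$, which is precisely $P'\hada A'=P\hada A$. This is the computation appearing at the end of the proof of Lemma \ref{lemmpuntiugualihada}.

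For the ``if'' direction I would assume $P\hada A\neq P'\hada A'$ for all $P,P'$, so that by the translation no line of the first family equals a line of the second. Then each pair $(P\hada L',\,P'\hada L)$ consists of two distinct lines of $\mathbb{P}^2$, meeting in the single point $P\hada P'$. This yields the reverse inclusion: any $R\in (X\hada L')\cap(X'\hada L)$ lies on some $P\hada L'$ and some $P'\hada L$, and since these are distinct their unique common point is $P\hada P'$, whence $R\in X\hada X'$. For the count, suppose $P_1\hada P_1'=P_2\hada P_2'$; the hypothesis $X\cap X'=\emptyset$ guarantees $\{P_1,P_2\}\cap\{P_1',P_2'\}=\emptyset$, so Lemma \ref{lemmpuntiugualihada} applies and forces either $(P_1,P_1')=(P_2,P_2')$ or $P_i\hada A=P_j'\hada A'$ for some $i\neq j$. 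The latter is excluded by assumption, so the $nm$ products $P\hada P'$ are distinct and $X\hada X'$ is the $nm$ element grid.

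For the ``only if'' direction I would argue by contrapositive: if $P\hada A=P'\hada A'$ for some pair, then by the translation the lines $P\hada L'$ and $P'\hada L$ coincide, so this entire line lies in $(X\hada L')\cap(X'\hada L)$; the intersection is therefore infinite, whereas $X\hada X'$ is a finite set, so $X\hada X'$ cannot equal the $nm$ element grid. The hard part will be the reverse set-inclusion in the ``if'' direction, together with the bookkeeping of the dictionary between the algebraic equality $P\hada A=P'\hada A'$ and the geometric coincidence of two grid lines; once this dictionary is in place, Lemma \ref{lemmpuntiugualihada} supplies the counting and the contrapositive handles necessity.
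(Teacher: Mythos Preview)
Your proof is correct and follows essentially the same approach as the paper: both establish the dictionary ``$P\hada L'=P'\hada L$ iff $P\hada A=P'\hada A'$'' (which is exactly the computation at the end of Lemma \ref{lemmpuntiugualihada}), use it together with Lemma \ref{lemmpuntiugualihada} to count the $nm$ distinct products in the ``if'' direction, and use its contrapositive for the ``only if'' direction. Your treatment is in fact a bit more explicit than the paper's, which simply asserts that $X\hada X'=(X\hada L')\cap(X'\hada L)$ is ``easy to check'' and invokes Corollary \ref{distinctlines} where you appeal directly to Theorem \ref{teobocci}(2).
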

\begin{proof}
 If $(X\hada L')\cap(X'\hada L)$ is a grid with $nm$ elements, then the lines $\{P\hada L'\mid P\in X\}$ and $\{P'\hada L\mid P'\in X'\}$ are all distinct.   With the same reasoning of  Lemma
\ref{lemmpuntiugualihada}, we can prove that   $P\hada A\not=P'\hada A'$ for all $P\in X$ and $P'\in X'.$
\par   Conversely, since $P\hada A\not=P'\hada A', $ then $\{P\hada L'\mid P\in X\}$ and $\{P'\hada L\mid P'\in X'\}$  are two families of distinct lines by Corollary \ref{distinctlines}. Moreover, since $P\hada A\not=P'\hada A'$, for all $P\in X$ and $P'\in X', $  as in the proof of Lemma \ref{lemmpuntiugualihada},  we obtain  that also $\{P\hada L', P'\hada L\mid P\in X, P'\in X'\}$  is a family of distinct lines. On the other hand it is easy to check that $X\hada X'=(X\hada L')\cap(X'\hada
L).$ Now suppose   $X\hada X'$  has fewer than  $nm$ elements, then there exist $P_1,P_2\in X$ and $P'_1,P'_2\in X' $  with $P_1\not=P_2$ and $P'_1\not=P'_2 $ such that   $P_1\hada P'_1=P_2\hada P'_2.$  By Lemma \ref{lemmpuntiugualihada} this forces
   $P_1\hada A=P'_2\hada  A'$ against the hypothesis.
   \end{proof}

 \begin{cor}\label{sameline}
 Let $X,Y$ be two disjoint  sets of points  both contained in the same line $L. $ Suppose   $X\cap\Delta_1=\emptyset=Y\cap \Delta_1$ and   $L\cap \Delta_0=\emptyset.$ If  $ |X|=n$ and   $ |Y|=m,$ then  $X\hada Y$ is the   $nm$ element grid $(X\hada L)\cap(Y\hada L).$
\end{cor}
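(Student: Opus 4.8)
The plan is to deduce this Corollary directly from Theorem \ref{grid} by specializing to the case $L'=L$ and checking that the equivalent condition appearing in that theorem holds automatically here. First I would unwind the hypothesis $L\cap\Delta_0=\emptyset$ in terms of the coefficient point $A=[a_0:a_1:a_2]$. In $\mathbb{P}^2$ the set $\Delta_0$ consists precisely of the three coordinate points $[1:0:0],[0:1:0],[0:0:1]$, and $[1:0:0]\in L$ exactly when $a_0=0$, with the analogous statements for the other two. Hence $L\cap\Delta_0=\emptyset$ is equivalent to $a_0a_1a_2\neq 0$, that is, to $A\not\in\Delta_1$. Taking $L'=L$ forces $A'=A\not\in\Delta_1$, so the standing hypotheses $A,A'\not\in\Delta_1$ of Theorem \ref{grid} are met.

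Next I would record that the remaining hypotheses of Theorem \ref{grid} are immediate: the assumptions $X\cap\Delta_1=\emptyset$ and $Y\cap\Delta_1=\emptyset$ give $X\subseteq L\setminus\Delta_1$ and $Y\subseteq L\setminus\Delta_1$, and $X,Y$ are assumed disjoint, so $X\cap Y=\emptyset$. Thus only the equivalent condition of Theorem \ref{grid} remains to be verified, namely that $P\hada A\neq P'\hada A'$ for every $P\in X$ and $P'\in Y$.

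Because $X$ and $Y$ are disjoint, any such $P$ and $P'$ satisfy $P\neq P'$; since $A=A'\not\in\Delta_1=\Delta_{n-1}$ for $n=2$, part $(1)$ of Theorem \ref{teobocci} gives $P\hada A=P'\hada A$ if and only if $P=P'$, whence $P\hada A\neq P'\hada A=P'\hada A'$. With this condition in hand, Theorem \ref{grid} applied with $X'=Y$ and $L'=L$ yields that $X\hada Y$ is the $nm$ element grid $(X\hada L)\cap(Y\hada L)$, as claimed. The only point requiring a moment's care — and the closest thing to an obstacle in an otherwise routine specialization — is the translation of the geometric condition $L\cap\Delta_0=\emptyset$ into the algebraic statement $A\not\in\Delta_1$; once that is in place, the result is a direct substitution into Theorem \ref{grid} combined with the injectivity statement of Theorem \ref{teobocci}$(1)$.
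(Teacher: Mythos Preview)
Your argument is correct and follows the same route as the paper: verify the condition $P\hada A\neq P'\hada A'$ of Theorem~\ref{grid} via Theorem~\ref{teobocci}(1), then apply Theorem~\ref{grid} with $L'=L$. The only difference is that you spell out the translation $L\cap\Delta_0=\emptyset\iff A\not\in\Delta_1$ explicitly, whereas the paper leaves this implicit.
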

 \begin{proof}
 Let $L$ be defined by $a_0x_0+a_1x_1+a_2x_2=0$ and let $A=[a_0:a_1:a_2],$ then, for all $P\in X$ and all $P'\in Y, $ we have  $P\hada A\not= P'\hada A$ by   $ (1)$ of Theorem \ref{teobocci}. Now the conclusion follows from Theorem \ref{grid}.
 \end{proof}

 \begin{cor}\label{generic}
Let $L, L'$ be two generic distinct lines in $\mathbb{P}^2.$ There is  a generic choice of a finite set of points  $X\subseteq L$   for which it is possible a generic choice of a  finite set of points $X'\subseteq L'$   such  that  $X\hada X'$ is the grid $(X\hada L')\cap(X'\hada L).$
\end{cor}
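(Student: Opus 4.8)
The plan is to deduce the statement entirely from Theorem \ref{grid} by checking that each of its hypotheses holds off a proper closed locus, and to handle the two layers of genericity—first in $X$, then in $X'$—one after the other. Throughout, write $L:a_0x_0+a_1x_1+a_2x_2=0$ and $L':a'_0x_0+a'_1x_1+a'_2x_2=0$, and set $A=[a_0:a_1:a_2]$, $A'=[a'_0:a'_1:a'_2]$.

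First I would record what genericity of the pair $(L,L')$ buys. The conditions $a_0a_1a_2\neq0$ and $a'_0a'_1a'_2\neq0$ are Zariski-open in the space of pairs of lines, so generically $A,A'\notin\Delta_1$; the same openness gives $L\cap\Delta_0=\emptyset=L'\cap\Delta_0$, since a line passes through one of the three coordinate points only on a proper closed locus. Hence, by Lemma \ref{rettadelta1}, each of $L$ and $L'$ meets $\Delta_1$ in exactly three points, and $L\cap L'$ is a single point because $L\neq L'$.

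Next I would choose $X\subseteq L$ generic. A generic set of points on $L\cong\mathbb{P}^1$ avoids the three points of $L\cap\Delta_1$ and the point $L\cap L'$, so $X\subseteq L\setminus\Delta_1$ and $X$ avoids $L\cap L'$. For each $P=[p_0:p_1:p_2]\in X$ all coordinates are nonzero, and since $A\notin\Delta_1$ the product $P\hada A=[p_0a_0:p_1a_1:p_2a_2]$ again has all coordinates nonzero. Because $A'\notin\Delta_1$, part $(1)$ of Theorem \ref{teobocci} says the map $R\mapsto R\hada A'$ is injective, so there is at most one point $R_P$ with $R_P\hada A'=P\hada A$, namely $R_P=[\,p_0a_0/a'_0:p_1a_1/a'_1:p_2a_2/a'_2\,]$. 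Thus the auxiliary set $B:=\{R_P\mid P\in X\}\cap L'$ is finite, with at most $|X|$ elements.

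Finally I would choose $X'\subseteq L'$ generic relative to $X$: a generic set of points on $L'\cong\mathbb{P}^1$ avoids the finite set $B$, the three points of $L'\cap\Delta_1$, and the point $L\cap L'$. Avoiding $B$ together with the injectivity above forces $P\hada A\neq P'\hada A'$ for all $P\in X$ and $P'\in X'$; avoiding $L'\cap\Delta_1$ gives $X'\subseteq L'\setminus\Delta_1$; and $X\cap X'\subseteq L\cap L'=\emptyset$ by the choices of $X$ and $X'$. All hypotheses of Theorem \ref{grid} now hold, so $X\hada X'$ is the $|X||X'|$-element grid $(X\hada L')\cap(X'\hada L)$. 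I expect the only real subtlety to be the bookkeeping of the nested genericity: one must verify that for a generic $X$ the locus $B$ is a proper (in fact finite) subset of $L'$, so that the subsequent generic choice of $X'$ is genuinely available; this is immediate since all forbidden loci are finite and $\mathbb{K}$ is infinite. Everything else is a direct appeal to the injectivity statements in Theorem \ref{teobocci} and the count in Lemma \ref{rettadelta1}.
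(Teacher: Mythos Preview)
Your proof is correct and follows essentially the same route as the paper's: both reduce to Theorem~\ref{grid} by checking that the condition $P\hada A\neq P'\hada A'$ fails only on a finite (hence proper) locus, with your explicit formula $R_P=[p_0a_0/a'_0:p_1a_1/a'_1:p_2a_2/a'_2]$ playing the role of the paper's minor-vanishing argument on the parametrized matrix $M(\lambda,\mu)$. One small slip: $L\cap L'$ is a single point, not empty; but since you arranged for both $X$ and $X'$ to avoid that point, $X\cap X'=\emptyset$ still follows.
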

\begin{proof}
Let $L,L' $ be  defined by  $a_0x_0+a_1x_1+a_2x_2=0$ and $a'_0x_0+a'_1x_1+a'_2x_2=0$ respectively and let $A=[a_0:a_1:a_2]$ and $A'=[a'_0:a'_1:a'_2].$ We may assume  $L\cap\Delta_0=\emptyset=L'\cap\Delta_0,$ whence $A,A'\not\in \Delta_1.$  Let  $P,Q\in L\setminus \Delta_1$ and $P' \in L'\setminus \Delta_1$ be distinct points. By   $ (1)$ of Theorem \ref{teobocci} $P\hada A\not=Q\hada A,$ then either $P\hada A\not=P'\hada A'$ or $Q\hada A\not=P'\hada A'.$ Suppose $P\hada A\not=P'\hada A'$ and consider   the  $2\times 3$ matrix $M(\lambda,\mu)=\left(\begin{array}{c}A\\  A' \end{array}\right)\hada\left(\begin{array}{c}\lambda P+\mu Q\\  P' \end{array}\right)$ with $[\lambda:\mu] \in \mathbb{P}^1.$ Then   $M(1,0)$ has a non-zero $2\times 2$ minor. The corresponding minor in $M(\lambda,\mu)$ is a non-zero linear form  $F(\lambda,\mu).$ Let $P_0\in L $ be the point corresponding to the zero locus of $F.$  Thus the set $L\setminus\{  P _0\} $ is a non empty open subset $U$  of $L $. Moreover, if    $R\in U$ then    $R\hada  A\neq P'\hada A'.$ Now consider  a finite set of points  $X\subseteq  U\cap (L\setminus\Delta_1).$ For any  point $R\in X,$ by the same reasoning as before, we find a non empty open subset $U'_R$  of $L'$ such that $R\hada A\neq R'\hada A'$  for any point $R'\in U'_R.$  Set $U'=\cap_{R\in X} U'_R.$ If $X'\subseteq U'\cap( L'\setminus\Delta_1)$  is a finite set of points
  then $P\hada A\not=P'\hada A'$  for   all $P\in X$ and $P'\in X'.$
  \par Now the claim follows from Theorem \ref{grid}.
\end{proof}

\begin{oss}\label{HF2}\rm
The grids obtained in Corollaries   \ref{sameline} and  \ref{generic} are  complete intersections so the Hilbert Functions and even the resolutions are known.
\end{oss}

\begin{ex} \rm
Let $L$ and $L'$ be respectively the  lines $3x_0+x_1-30x_2=0$ and $67x_0-6x_1-110x_2$ (randomly chosen by \texttt{Singular}). Consider the sets of points (still randomly chosen by \texttt{Singular}), which satisfy the hypotheses of Theorem \ref{grid},
\[
\begin{array}{l}
X=\{[6:12:1], [22:54:4],[29:63:5]\}\subset L\\
X'=Y=\{[22:154:5],[28:221:5],[34:288:5], [18, 146,3]\}\subset L'
\end{array}
\]
Using the procedure of Section \ref{singular} we compute the ideal $I$ of $X\hada X'$ and then its Hilbert function
\begin{verbatim}
ideal I=HPr(X,Y,2);

HF(2,I,0)=1;
HF(2,I,1)=3;
HF(2,I,2)=6;
HF(2,I,3)=9;
HF(2,I,4)=11;
HF(2,I,5)=12;
HF(2,I,6)=12;
\end{verbatim}
that is $H(t)=12$ for $t\geq 5$.
\par
As expected, $X\hada X'$ is a complete intersection.
\end{ex}

\section{Sets of collinear points in ${\mathbb{P}}^3$}
We keep assuming that the sets of points under consideration have cardinalities at least two.

  \begin{lemm}\label{genericH}
  Let $L $ be a line in $\mathbb{P}^3$ such that $L\cap\Delta_0=\emptyset$  and let $H$ be  a generic  plane  trough $L.$ Then $H\cap\Delta_0=\emptyset.$ Equivalently,   if $A$ is the point  corresponding  to $H $ in the dual space,    then $A\not\in \Delta_2.$
\end{lemm}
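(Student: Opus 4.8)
The plan is to parametrize the pencil of planes through $L$ and to show that only finitely many of them meet $\Delta_0$, so that the generic member avoids it. Recall that in $\mathbb{P}^3$ the set $\Delta_0$ consists of exactly the four coordinate points $e_0,e_1,e_2,e_3$ (the points with at least three zero coordinates). The hypothesis $L\cap\Delta_0=\emptyset$ says precisely that none of these four points lies on $L$.

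First I would write $L=F_1\cap F_2$ for two distinct planes with linear equations $f_1,f_2$; then every plane through $L$ has an equation $\lambda f_1+\mu f_2=0$ for a unique $[\lambda:\mu]\in\mathbb{P}^1$, so the pencil of planes through $L$ is identified with $\mathbb{P}^1$. Next I would fix a coordinate point $e_i\in\Delta_0$ and determine which planes of the pencil contain it. The plane $\{\lambda f_1+\mu f_2=0\}$ passes through $e_i$ exactly when $\lambda f_1(e_i)+\mu f_2(e_i)=0$, a single linear condition on $[\lambda:\mu]$. Since $e_i\notin L=F_1\cap F_2$, the pair $(f_1(e_i),f_2(e_i))$ is not $(0,0)$, so this condition has a unique solution in $\mathbb{P}^1$. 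Hence exactly one plane of the pencil contains $e_i$, namely the plane spanned by $L$ and $e_i$.

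Running over the four coordinate points, the planes of the pencil meeting $\Delta_0$ correspond to at most four points of $\mathbb{P}^1$. Their complement is a nonempty open set $U\subseteq\mathbb{P}^1$, and every plane $H$ parametrized by $U$ satisfies $H\cap\Delta_0=\emptyset$; this is the generic plane asserted. For the equivalent dual formulation, I would write $H\colon a_0x_0+a_1x_1+a_2x_2+a_3x_3=0$ and $A=[a_0:a_1:a_2:a_3]$: then $H$ contains $e_i$ if and only if $a_i=0$, so $H\cap\Delta_0\neq\emptyset$ if and only if some coordinate of $A$ vanishes, i.e. $A\in\Delta_2$. Taking contrapositives gives $H\cap\Delta_0=\emptyset \iff A\notin\Delta_2$, as claimed. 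The statement is elementary, so there is no substantive obstacle; the only point requiring care is the observation that $e_i\notin L$ forces $(f_1(e_i),f_2(e_i))\neq(0,0)$, which is exactly the translation of $e_i\notin F_1\cap F_2$ and is what makes the bad locus finite.
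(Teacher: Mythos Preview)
Your proof is correct and follows essentially the same approach as the paper, which simply asserts that the planes through $L$ containing some coordinate point are finite. You have spelled out the details the paper omits---parametrizing the pencil by $\mathbb{P}^1$, showing each $e_i\notin L$ lies on exactly one plane of the pencil, and verifying the dual reformulation---so your argument is a fleshed-out version of the same idea.
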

 \begin{proof}
 It is immediate since the planes through $L$ which contain some coordinate point are finite.
 \end{proof}

\begin{teo}\label{teop3}
Let $L, L'$ be two lines in $\mathbb{P}^3$,  $L=H\cap K$, $L'=H' \cap K'$, let  $A$, $B$, $A'$ and $B'$  be the points which correspond to $H,K,H',K'$ in the dual space  and suppose  $A, B, A', B'\not\in\Delta_2$. Let $X\subseteq L$ and $X'\subseteq L'$ be two finite sets of points such that $X\cap\Delta_2=\emptyset= X'\cap\Delta_2$ and $X\cap X'=\emptyset.$  Suppose $rank\left(\begin{array}{c}A\\  B\\  A'\\  B'\end{array}\right)\hada\left(\begin{array}{c}P\\  P\\  P'\\  P'\end{array}\right)>2$ for all $P\in X\subset L$ and $P'\in X'\subset L',$ then,  $X\hada X'=(X\hada L')\cap(X'\hada L)$ and $|X\hada X'|=|X||X'|.$
\end{teo}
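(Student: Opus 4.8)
The plan is to reduce the statement in $\mathbb{P}^3$ to the situation already handled in $\mathbb{P}^2$ by intersecting with the two planes through each line, and then to promote the counting argument of Theorem \ref{grid} to the present setting. First I would use Corollary \ref{hadarettap3}: since $A,B\not\in\Delta_2$ means $H\cap\Delta_0=\emptyset=K\cap\Delta_0$, for every $P\in X\setminus\Delta_2$ the product $P\hada L$ equals the intersection of the two planes $P\hada H$ and $P\hada K$, hence is a genuine line; likewise $P'\hada L'$ is a line for every $P'\in X'$. Thus $X\hada L$ is a family of $|X|$ lines and $X'\hada L$ is a family of $|X'|$ lines, living in the ruled variety swept out, and the candidate set $(X\hada L')\cap(X'\hada L)$ is the intersection of the two corresponding unions of lines.

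The inclusion $X\hada X'\subseteq(X\hada L')\cap(X'\hada L)$ is immediate, since for $P\in X$, $P'\in X'$ we have $P\hada P'\in P\hada L'\subseteq X\hada L'$ and symmetrically $P\hada P'\in P'\hada L\subseteq X'\hada L$. For the reverse inclusion and for the count, the heart of the matter is to show that the map $(P,P')\mapsto P\hada P'$ is injective on $X\times X'$, i.e.\ that $P_1\hada P_1'=P_2\hada P_2'$ with $(P_1,P_1')\neq(P_2,P_2')$ cannot occur. I would argue as in Lemma \ref{lemmpuntiugualihada}: if $P_1=P_2$ then part $(1)$ of Theorem \ref{teobocci} (applicable since $X'\cap\Delta_2=\emptyset$) forces $P_1'=P_2'$, and symmetrically; so we may assume all four points distinct. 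A coincidence $P_1\hada P_1'=P_2\hada P_2'$ would then say that the point lies simultaneously on the lines $P_1\hada L'$ and $P_2'\hada L$, and on $P_2\hada L'$ and $P_1'\hada L$, producing a degenerate configuration of four lines meeting. This is exactly where the rank hypothesis enters: the condition $\mathrm{rank}\bigl(\begin{smallmatrix}A\\ B\\ A'\\ B'\end{smallmatrix}\bigr)\hada\bigl(\begin{smallmatrix}P\\ P\\ P'\\ P'\end{smallmatrix}\bigr)>2$ says that the four linear forms cutting out $P\hada H$, $P\hada K$, $P'\hada H'$, $P'\hada K'$ are linearly independent, i.e.\ the two lines $P\hada L$ and $P'\hada L'$ are skew (do not meet and do not coincide). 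I would translate the hypothesized coincidence into a rank-$\le 2$ relation among these four coordinate vectors, contradicting the assumption.

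The step I expect to be the genuine obstacle is this last translation: in $\mathbb{P}^2$ the coincidence forced two \emph{lines} to coincide (a codimension-one event captured by a single determinant), but in $\mathbb{P}^3$ a coincidence of two \emph{points} lying on four lines is a codimension-two event, so the bookkeeping must be done through the $4\times 4$ rank matrix rather than a single $2\times 2$ minor. Concretely, I would write $P\hada H$, $P\hada K$ as $\tfrac{a_i}{p_i}$ and $\tfrac{b_i}{p_i}$ coordinates (using part $(2)$ of Theorem \ref{teobocci}) and similarly for the primed data, assemble them as the rows of the displayed Hadamard matrix, and show that $P_1\hada P_1'=P_2\hada P_2'$ produces a nontrivial linear dependence among these rows, hence the rank drops to $2$ for a suitable choice of $P\in X$ and $P'\in X'$, contradicting the hypothesis. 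Once injectivity is established, $|X\hada X'|=|X||X'|$ is automatic, and the reverse inclusion $(X\hada L')\cap(X'\hada L)\subseteq X\hada X'$ follows because any intersection point of a line $P\hada L'$ with a line $P'\hada L$ is of the form $P\hada P'$, so the grid has no extraneous points.
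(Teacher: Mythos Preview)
Your overall plan---show each $P\hada L'$ and $P'\hada L$ is a line, verify the easy inclusion, then prove injectivity of $(P,P')\mapsto P\hada P'$---matches the paper's. But there is a genuine misreading of the rank hypothesis that derails the key step.

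You assert that $\operatorname{rank}>2$ means the four linear forms are linearly independent and hence the two lines are \emph{skew}. This is wrong on both counts. First, the rows of the displayed Hadamard matrix (after the $P\leftrightarrow 1/P$ swap the paper performs) are the coefficient vectors of $P'\hada H,\ P'\hada K,\ P\hada H',\ P\hada K'$, i.e.\ the equations of $P'\hada L$ and $P\hada L'$---not of $P\hada H,\ P\hada K,\ P'\hada H',\ P'\hada K'$ as you write. Second, these two lines always meet: $P\hada P'$ lies on both. So the rank is at most $3$, and the hypothesis $\operatorname{rank}>2$ says precisely that the rank equals $3$, i.e.\ the lines $P\hada L'$ and $P'\hada L$ are \emph{distinct} (they meet in exactly one point), not skew. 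The paper makes this explicit in the Remark immediately following the theorem.

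With that corrected, the injectivity argument is much cleaner than the coordinate computation you anticipate. The paper argues: suppose $P_1\hada P_1'=P_2\hada P_2'$ with $P_1\neq P_2$ and $P_1'\neq P_2'$. Then this common point and the point $P_1\hada P_2'$ are two \emph{distinct} points (distinct by part~(1) of Theorem~\ref{teobocci}, since $P_1'\neq P_2'$) lying on both $P_1\hada L'$ and $P_2'\hada L$. Two distinct points determine a unique line, so $P_1\hada L'=P_2'\hada L$, contradicting the rank-$3$ (distinctness) conclusion for the pair $(P_1,P_2')$. No explicit coordinate bookkeeping is needed.

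Finally, your reverse inclusion ``any intersection point of $P\hada L'$ with $P'\hada L$ is $P\hada P'$'' also relies on the lines being distinct and meeting---i.e.\ on rank exactly $3$---so it too needs the corrected reading. (There are also several notational slips in your first paragraph: you write $P\hada L$ and $P'\hada L'$ where you need $P\hada L'$ and $P'\hada L$.)
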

\begin{proof} Let $P=[p_0:\dots:p_3]\in X$ and $P'=[p_0':\dots:p_3']\in X',$ first we show that   $P\hada L'$ and $P'\hada L$  are distinct lines. They are lines by  Lemma 3.1 of  \cite{BCK}.  On the other hand, by Corollary \ref{hadarettap3}, we have that $P\hada L'=(P\hada H')\cap( P\hada K')$, and $P'\hada L=(P'\hada H)\cap (P'\hada K).$ If we had $P\hada L'=P'\hada L,$ then,  after denoting   $\frac{1}{P}=[\frac{1}{p_0}:\dots:\frac{1}{p_3}]$ and  $\frac{1}{P'}=[\frac{1}{p_0'}:\dots:\frac{1}{p_3'}],$ we  would have that $rank\left(\begin{array}{c}A\\   B\\   A'\\   B'\end{array}\right)\hada\left(\begin{array}{c}\frac{1}{P'}\\   \frac{1}{P'
}\\   \frac{1}{P}\\   \frac{1}{P}\end{array}\right)=2.$ But  a straightforward computation shows that $$rank\left(\begin{array}{c}A\\   B\\   A'\\   B'\end{array}\right)\hada\left(\begin{array}{c}\frac{1}{P'}\\    \frac{1}{P'}\\    \frac{1}{P}\\    \frac{1}{P}\end{array}\right)=rank\left(\begin{array}{c}A\\   B\\  A'\\   B'\end{array}\right)\hada\left(\begin{array}{c}P\\   P\\   P'\\  P'\end{array}\right),$$
in contradiction with the hypothesis.
\par   Now let  $P_1,P_2\in X, $ we shall show that if $P_1\hada L'=P_2\hada L',$ then $P_1=P_2$.
 In fact, let $P'\in X',$ then,  we just showed that  $P_1\hada L'$    and  $P'\hada L$ are distinct lines, hence  $ (P_1\hada L')\cap (P'\hada L)$  is the point   $P_1\hada P'.$ Similarly,  $P_2\hada P'=  (P_2\hada L')\cap( P'\hada L).$ Therefore  $P_1\hada P'=P_2\hada P'$, hence $P_1=P_2,$ by  $ (1)$ of Theorem \ref{teobocci}.
\par   In a similar way we can  prove  that, for any $P_1',P_2'\in X',$ if $P_1'\hada L=P_2'\hada L,$ then $P_1'=P_2'$.
Finally,  we prove that for any  $P_1,P_2\in X$ and for any  $P_1',P_2'\in X',$  $P_1\hada P_1'\not=P_2\hada P_2'$ provided $P_1\not= P_2$ and $P_1'\not=P_2'.$
  Assume, by contradiction, that  $P_1\hada P_1'=P_2\hada P_2'$ but then we would have $P_1\hada L'=P_2'\hada L.$

\end{proof}

\begin{oss}\rm
Observe that, under all hypotheses of Theorem \ref{teop3}, the hypothesis $rank\left(\begin{array}{c}A\\  B\\  A'\\  B'\end{array}\right)\hada\left(\begin{array}{c}P\\  P\\  P'\\  P'\end{array}\right)>2$ forces $rank\left(\begin{array}{c}A\\  B\\  A'\\  B'\end{array}\right)\hada\left(\begin{array}{c}P\\  P\\  P'\\  P'\end{array}\right)=3,$ since $rank\left(\begin{array}{c}A\\  B\\  A'\\  B'\end{array}\right)\hada\left(\begin{array}{c}P\\  P\\  P'\\  P'\end{array}\right)=4$ would imply that $P'\hada L$ and $P\hada L'$ are disjoint, while they meet in $P\hada P'$.
\end{oss}

 \begin{lemm}\label{plane}
 Hypotheses as in  Theorem \ref{teop3}. If  there exist $P,Q\in X$ with $P\neq Q$ such that $(P\hada L' )\cap (Q\hada L')\not=\emptyset,$ then  $X\hada L'$ and  $X'\hada L$ are  contained in the same plane. Similarly if there exist $P', Q'\in X'$ with $P'\neq Q'$ such that $(P'\hada L)\cap (Q'\hada L)\not=\emptyset.$
 \end{lemm}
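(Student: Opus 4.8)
The plan is to produce a single plane $\pi$ and then show that every line of $X\hada L'$ and of $X'\hada L$ lies in it. First I would record the two facts furnished by Theorem \ref{teop3}: for distinct $P,Q\in X$ the lines $P\hada L'$ and $Q\hada L'$ are distinct, and for any $P\in X$, $P'\in X'$ the lines $P\hada L'$ and $P'\hada L$ meet in exactly the single point $P\hada P'$ (they are distinct lines, and $P\hada P'$ lies on both since $P'\in L'$ and $P\in L$, and two distinct lines in $\mathbb{P}^3$ meet in at most one point). Given the hypothesis that $(P\hada L')\cap(Q\hada L')\neq\emptyset$ for some $P\neq Q$ in $X$, these two distinct lines meet in a point and hence span a unique plane $\pi$; this $\pi$ is the candidate plane.

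Next I would show $X'\hada L\subseteq\pi$. Fix $P'\in X'$. The line $P'\hada L$ meets $P\hada L'$ in the point $P\hada P'$ and meets $Q\hada L'$ in the point $Q\hada P'$, both of which lie in $\pi$ since $P\hada L',Q\hada L'\subset\pi$. The key point is that these two points are \emph{distinct}: since $P'\notin\Delta_2$ and $P\neq Q$, part $(1)$ of Theorem \ref{teobocci} gives $P\hada P'\neq Q\hada P'$. A line through two distinct points of $\pi$ is contained in $\pi$ (as $\pi$ is a linear subspace), so $P'\hada L\subseteq\pi$; since $P'\in X'$ was arbitrary, $X'\hada L\subseteq\pi$.

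Finally I would run the same argument one level up to absorb the whole first family. Fix any $P''\in X$ and choose two distinct points $P_1',P_2'\in X'$ (available because $|X'|\geq 2$). The line $P''\hada L'$ meets $P_1'\hada L$ in $P''\hada P_1'$ and meets $P_2'\hada L$ in $P''\hada P_2'$, and both of these points lie in $\pi$ by the previous paragraph. Again by $(1)$ of Theorem \ref{teobocci}, the facts $P''\notin\Delta_2$ and $P_1'\neq P_2'$ force $P''\hada P_1'\neq P''\hada P_2'$, so $P''\hada L'$ has two distinct points in $\pi$ and therefore lies in $\pi$. Hence $X\hada L'\subseteq\pi$ as well, and both families lie in the common plane $\pi$. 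The ``similarly'' clause is the mirror image, obtained by interchanging the roles of $X,L$ with $X',L'$. The only delicate step is guaranteeing that each transversal line really acquires two \emph{distinct} coplanar points, rather than meeting the spanning pair at one common point; this is precisely what the hypotheses $X\cap\Delta_2=\emptyset=X'\cap\Delta_2$ secure through the injectivity statement $(1)$ of Theorem \ref{teobocci}.
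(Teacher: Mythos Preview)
Your proof is correct and follows essentially the same route as the paper: span the plane $\pi$ by the two intersecting lines $P\hada L'$ and $Q\hada L'$, force each $P'\hada L$ into $\pi$ via the two distinct points $P\hada P'$, $Q\hada P'$, and then bootstrap back to place every $P''\hada L'$ into $\pi$ using two lines of the second family. Your version is in fact slightly more explicit than the paper's, since you invoke part~$(1)$ of Theorem~\ref{teobocci} (together with $X,X'\subseteq\mathbb{P}^3\setminus\Delta_2$) to justify the distinctness $P\hada P'\neq Q\hada P'$ and $P''\hada P_1'\neq P''\hada P_2'$, whereas the paper leaves this implicit.
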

\begin{proof} We only prove the first statement, the other being similar.
\par
Since  $(P\hada L' )\cap (Q\hada L')\not=\emptyset,$ then  they determine a plane $\Pi$.  Now, let $P'$ be any point of $X'$ and
consider the line $P'\hada L.$ By the proof of Theorem \ref{teop3}, one has
\[
(P \hada L')\cap( P'\hada L)=P\hada P' \mbox { and } (Q \hada L')\cap( P'\hada L)=Q\hada P'
\]
hence $P\hada P',Q\hada P'$ are distinct points of $\Pi$ and thus also the line $P'\hada L$ lies in $\Pi$, hence $X'\hada L$ is contained in the plane $\Pi.$ Now let $Q'$ be any other point of $X',$ then $(P'\hada L)\cap (Q'\hada L)\not=\emptyset$ and, form what we have proved,
$ P'\hada L$ and  $Q'\hada L$ both lie in $\Pi.$
With the same reasoning we have that $X\hada L'$ is contained in the plane determined by $ P'\hada L$ and  $Q'\hada L,$ which is $\Pi.$
\end{proof}

 \begin{cor}\label{generic3}
Let $L, L'$ be two generic distinct lines in $\mathbb{P}^3.$  There is  a generic choice of a finite set of points  $X\subseteq L$   for which it is possible a generic choice of a  finite set of points $X'\subseteq L'$   such  that:
\begin{enumerate}
\item   $X\hada X'=(X\hada L')\cap(X'\hada L)$ and $|X\hada X'|=|X||X'|.$
\item  $L \hada L'$ is an   irreducible and non-degenerate  quadric, and   $X\hada L'$ and $X'\hada L$ are lines of the two different rulings.
 \end{enumerate}
 \end{cor}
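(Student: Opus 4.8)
The plan is to obtain part (2) — and, as a byproduct, the rank hypothesis needed for part (1) — from an explicit parametrization of $L\hada L'$ as a Segre-type surface, and then to read off part (1) from Theorem \ref{teop3}. First I would pin down the genericity. For generic $L,L'$ one has $L\cap\Delta_0=\emptyset=L'\cap\Delta_0$ (a general line avoids the four coordinate points), the lines are skew so $L\cap L'=\emptyset$, and $L\cap\Delta_2$, $L'\cap\Delta_2$ are finite (four points each, one on every $H_i$). By Lemma \ref{genericH} I may then present $L=H\cap K$ and $L'=H'\cap K'$ with dual points $A,B,A',B'\notin\Delta_2$, so the standing hypotheses of Theorem \ref{teop3} hold.

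For part (2), write $L$ as the image of $[s_0:s_1]\mapsto P(s)$ with $p_i(s)=s_0u_i+s_1v_i$, and $L'$ as $[t_0:t_1]\mapsto P'(t)$ with $p_i'(t)=t_0u_i'+t_1v_i'$. The $i$-th coordinate of $P(s)\hada P'(t)$ is the bilinear form $p_i(s)p_i'(t)$, so $L\hada L'$ is the image of $\mathbb{P}^1\times\mathbb{P}^1$ under a map given by four $(1,1)$-forms. Expanding in the monomial basis $\{s_0t_0,s_0t_1,s_1t_0,s_1t_1\}$, this map equals $N\circ\sigma$, where $\sigma$ is the Segre embedding $\mathbb{P}^1\times\mathbb{P}^1\hookrightarrow\mathbb{P}^3$ and $N$ is the $4\times4$ matrix whose $i$-th row is $(u_iu_i',\,u_iv_i',\,v_iu_i',\,v_iv_i')$. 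For generic lines $\det N\neq0$ (it suffices to exhibit one quadruple $U,V,U',V'$ with $\det N\neq0$, whence this is a dense open condition on the pair of lines), so the map is a linear isomorphism composed with $\sigma$ and $L\hada L'=N(\sigma(\mathbb{P}^1\times\mathbb{P}^1))$ is an irreducible, non-degenerate (smooth) quadric. Its two rulings are the images of the two rulings of $\mathbb{P}^1\times\mathbb{P}^1$, i.e. the families $\{P(s)\hada L'\}_s$ and $\{P'(t)\hada L\}_t$; for $P,P'\notin\Delta_2$ these are genuine lines (Lemma 3.1 of \cite{BCK}), they are the lines $P\hada L'$ and $P'\hada L$, and they lie in \emph{different} rulings because each such pair meets in the point $P\hada P'$ (proof of Theorem \ref{teop3}), while two lines of the same ruling of a smooth quadric are skew. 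As a cross-check, Lemma \ref{plane} rules out the degenerate alternatives: if the lines of one family pairwise met, everything would be coplanar, contradicting that $L\hada L'$ is a surface.

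For part (1), on a smooth quadric a line lies in exactly one ruling, so a line $P\hada L'$ and a line $P'\hada L$ — being in different rulings — are never equal. By the computation in the proof of Theorem \ref{teop3} the inequality $P\hada L'\neq P'\hada L$ is precisely the rank condition $\mathrm{rank}(\cdots)>2$ of that theorem; hence this condition holds for \emph{every} $P\in L\setminus\Delta_2$ and $P'\in L'\setminus\Delta_2$. Choosing finite sets $X\subseteq L\setminus\Delta_2$ and $X'\subseteq L'\setminus\Delta_2$ (the genericity amounting only to avoiding the finitely many points of $L\cap\Delta_2$ and $L'\cap\Delta_2$; the disjointness $X\cap X'=\emptyset$ is automatic since $L,L'$ are skew), all hypotheses of Theorem \ref{teop3} are met and it yields $X\hada X'=(X\hada L')\cap(X'\hada L)$ with $|X\hada X'|=|X||X'|$.

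The step I expect to be the \emph{main obstacle} is the genericity bookkeeping behind part (2): checking that $\det N\neq0$ cuts out a nonempty, hence dense, open locus in the space of pairs of lines, and that this is compatible with the separate requirements $A,B,A',B'\notin\Delta_2$ and $X\cap\Delta_2=\emptyset=X'\cap\Delta_2$. This compatibility is not a real conflict, since $\det N$ and $L\hada L'$ depend only on the lines $L,L'$, whereas the conditions on $A,B,A',B'$ depend only on the auxiliary choice of planes through $L$ and $L'$ furnished by Lemma \ref{genericH}; once $\det N\neq0$ is secured, the identification of the two families with the two rulings is immediate from the Segre picture and the intersection statement of Theorem \ref{teop3}.
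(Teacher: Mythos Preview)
Your argument is correct and takes a genuinely different route from the paper's. The paper establishes (1) first, by invoking Theorem 6.8 and Remark 6.9 of \cite{BCK} to rule out the possibility that the rank condition of Theorem \ref{teop3} fails for all $(P,P')$, then locating one good pair and mimicking the $\mathbb{P}^1$-parametrized minor argument of Corollary \ref{generic} to carve out open subsets of $L$ and $L'$ on which the rank is $>2$; only afterwards does it deduce (2), again citing \cite{BCK} for the quadric being irreducible and using Lemma \ref{plane} to show non-degeneracy. You instead prove (2) directly and self-containedly via the factorization $L\hada L'=N\circ\sigma(\mathbb{P}^1\times\mathbb{P}^1)$ through the Segre map, reducing everything to the single open condition $\det N\neq 0$ on the pair $(L,L')$; the identification of $\{P\hada L'\}$ and $\{P'\hada L\}$ with the two Segre rulings then forces $P\hada L'\neq P'\hada L$ for \emph{every} $P\in L\setminus\Delta_2$, $P'\in L'\setminus\Delta_2$, so the rank hypothesis of Theorem \ref{teop3} holds automatically and (1) follows with no further genericity on $X,X'$ beyond avoiding the finite set $\Delta_2$. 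Your approach is more explicit and yields the sharper statement that, for generic $L,L'$, the only constraint on $X,X'$ is $X\cap\Delta_2=\emptyset=X'\cap\Delta_2$; the paper's approach, by contrast, stays closer to the $\mathbb{P}^2$ template and leans on \cite{BCK} rather than re-deriving the quadric structure. One small wording issue: the clause ``they lie in different rulings because each such pair meets'' is superfluous and slightly misleading, since meeting alone does not distinguish ``different rulings'' from ``equal''; the Segre identification you give just before already places the two families in distinct rulings, and that is what you should cite.
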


\begin{proof}
\begin{enumerate}
\item  [  $ (1)$]
We may assume that     $L\cap\Delta_1=\emptyset=L'\cap\Delta_1, $ so that $L\cap \Delta_2$ and  $L'\cap \Delta_2$ are finite. By Lemma \ref{genericH} we can write   $L=H\cap K$ and  $L'=H' \cap K',$  with  $ A, B, A', B'\not\in\Delta_2,$ where     $A$, $B$, $A'$ and $B'$  are the points which correspond to $H,K,H',K'$ in the dual space.
\par If $rank\left(\begin{array}{c}A\\  B\\  A'\\  B'\end{array}\right)\hada\left(\begin{array}{c}P\\  P\\  P'\\ P'\end{array}\right)=2,$ for all $P\in L\setminus\Delta_2$ and $P'\in L'\setminus\Delta_2,$ then $P\hada L'=P'\hada L$ is a line, say $L'',$ for all  $P\in L\setminus\Delta_2$ and $P'\in L'\setminus\Delta_2.$  Thus $$L\hada L'=\overline{\bigcup\limits_{P\in L}\{P\hada L'\}}=\overline {(\bigcup\limits
  _{P\in L\setminus\Delta_2}\{P\hada L'\})\bigcup (\bigcup\limits_{P\in L \cap \Delta_2}\{P\hada L'\})}=$$
  $$= L''\bigcup\left(\bigcup\limits_{P\in L \cap \Delta_2}\{P\hada L'\}\right),$$ which is a union of a line and a finite number of linear spaces of dimension less than or equal to $1.$  This  contradicts  Theorem 6.8 of \cite{BCK} in view of Remark 6.9 of \cite{BCK}.
\par Hence there exist $P\in L\setminus\Delta_2$ and $P'\in L'\setminus\Delta_2$ such that
$$rank\left(\begin{array}{c}A\\  B\\  A'\\  B'\end{array}\right)\hada\left(\begin{array}{c}P\\  P\\  P'\\ P'\end{array}\right)=3.$$
  Consider  a point $Q\in L$ and   the $4\times 4$ matrix
 $$M(\lambda,\mu)=\left(\begin{array}{c}A\\  B\\  A'\\  B'\end{array}\right)\hada\left(\begin{array}{c}\lambda P+\mu Q\\ \lambda  P+\mu Q\\  P' \\ P' \end{array}\right)$$
  with $[\lambda: \mu]\in \mathbb{P}^1.$ Now we get the conclusion by mimicking the proof  of Corollary \ref{generic}  and by applying Theorem \ref{teop3}.
\item  [ $ (2)$]
 Since $L$ and $L'$ are generic, by Theorem 6.8 of \cite{BCK}, $L \hada L'$ is a quadric, in fact an irreducible one, as noticed right after Remark 2.5 of \cite{BCK}.   Since the quadric is irreducible, then
\[
(P'\hada L) \cap (Q'\hada L)=\emptyset \quad \forall P',Q' \in X'
\]
and similarly
\[
(P\hada L') \cap (Q\hada L')=\emptyset \quad \forall P,Q \in X.
\]
In fact, suppose $P'\hada L$ and $Q'\hada L$ intersect in a point, then, by Lemma \ref{plane},   the  lines $P'\hada L,$ $Q'\hada L$ and $P\hada L'$ are all distinct and lie in the same plane.
 But then  $L\hada L'$ would be reducible.
 \par On the other hand $P\hada L'$ and $P'\hada L$ intersect in $P\hada P'$ for all $P\in X$ and for all $P'\in X'.$
 \par Therefore $L\hada L'$ is also non degenerate.
 \end{enumerate}
 \end{proof}

 \begin{oss} \label{unique}\rm
  If both $|X|$ and $|X'|$ are strictly greater than $2,$ then we have at least three skew lines each with at least three points of $X\hada X'$ and this is enough to prove that $L\hada L'$ is the unique quadric through $X\hada X'.$ It would be interesting to understand the geometry of $X\hada X'$ on such a quadric.
 \end{oss}

 \begin{ex}
\rm In this example we compute the ideal of $X\hada X'$ and its Hilbert function, where $X$ and $X'$ are two sets of collinear points satisfying the hypotheses of Corollary \ref{generic3}.
\par Let $H,K$ be the planes defined by $x_0-x_1+x_2+2x_3=0$ and $x_0+2x_1-x_2+x_3=0$ and let $H',K'$ be the planes defined by $x_0+2x_1-2x_2+x_3=0$ and $2x_0+2x_1+x_2-4x_3=0.$  Let $L=H\cap K$ and $L'=H'\cap K'.$ Choose $X\subset L$ and $X'\subset L'$ where

\[X=\{[-2:1:1:1], [-1:-1:-2:1], [-2:3:4:1] \}
\]
and
\[
 X'=\{[-1:2:2:1], [11:-8:-2:1], [-7:7:4:1] \}.
\]
By computing the ideal of $X\hada X'$ with \texttt{Singular}, we obtain
\par
\noindent
{\small{\begin{verbatim}
ideal I=HPr(X,X',3)

I[1]=-3/31x(0)^2+41/62x(0)x(1)-15/31x(1)^2-169/186x(0)x(2)+
       +59/62x(1)x(2)-21/62x(2)^2-59/62x(0)x(3)+
        +845/186x(1)x(3)-287/124x(2)x(3)-105/62x(3)^2
I[2]=-3/31x(0)x(1)^2+18/31x(1)^3+10/31x(0)x(1)x(2)-
       -87/31x(1)^2x(2)-25/93x(0)x(2)^2+140/31x(1)x(2)^2-
       -75/31x(2)^3-24/31x(0)x(1)x(3)+137/31x(1)^2x(3)+
       +35/31x(0)x(2)x(3)-2261/186x(1)x(2)x(3)+
       +505/62x(2)^2x(3)+51/31x(0)x(3)^2-362/31x(1)x(3)^2+
       +1443/62x(2)x(3)^2--873/31x(3)^3
I[3]=6/31x(0)x(1)x(2)-36/31x(1)^2x(2)-10/31x(0)x(2)^2+
       +114/31x(1)x(2)^2-90/31x(2)^3+42/31x(0)x(2)x(3)-
       -238/31x(1)x(2)x(3)+303/31x(2)^2x(3)-1152/155x(0)x(3)^2+
        +192/31x(1)x(3)^2+3762/155x(2)x(3)^2-1728/31x(3)^3
I[4]=6/31x(0)x(2)^2-36/31x(1)x(2)^2+54/31x(2)^3+
        +3456/775x(0)x(1)x(3)-576/155x(1)^2x(3)-
       -1584/155x(0)x(2)x(3)+7944/775x(1)x(2)x(3)-
        -1476/155x(2)^2x(3)+3456/775x(0)x(3)^2+
        +4608/155x(1)x(3)^2-28296/775x(2)x(3)^2+
        +5184/155x(3)^3
I[5]=-6/5x(1)^3+6x(1)^2x(2)-10x(1)x(2)^2+50/9x(2)^3+
        +336/155x(0)x(1)x(3)-1954/155x(1)^2x(3)-
        -112/31x(0)x(2)x(3)+15742/465x(1)x(2)x(3)-
        -5972/279x(2)^2x(3)-428/155x(0)x(3)^2+
        +13652/465x(1)x(3)^280218/1395x(2)x(3)^2+
        +91204/1395x(3)^3
I[6]=-x(1)^2x(2)+10/3x(1)x(2)^2-25/9x(2)^3+
        +56/31x(0)x(2)x(3)-884/93x(1)x(2)x(3)+
        +2986/279x(2)^2x(3)-3956/465x(0)x(3)^2+
       +392/31x(1)x(3)^2+28958/1395x(2)x(3)^2-
        -16252/279x(3)^3
I[7]=-x(1)x(2)^2+5/3x(2)^3+3956/775x(0)x(1)x(3)-
        -1176/155x(1)^2x(3)-5102/465x(0)x(2)x(3)+
        +15944/775x(1)x(2)x(3)-6703/465x(2)^2x(3)+
         +3956/775x(0)x(3)^2+12724/465x(1)x(3)^2-
         -88388/2325x(2)x(3)^2+16252/465x(3)^3
I[8]=-1/4x(2)^3-1278/775x(0)x(1)x(3)-252/155x(1)^2x(3)+
        +687/155x(0)x(2)x(3)+3228/775x(1)x(2)x(3)-
        -172/155x(2)^2x(3)-4518/775x(0)x(3)^2-234/155x(1)x(3)^2
        +988/775x(2)x(3)^2-422/155x(3)^3
        \end{verbatim}}}
\noindent
 whose Hilbert function is given by
\begin{verbatim}
HF(3,I,0)=1
HF(3,I,1)=4
HF(3,I,2)=9
HF(3,I,3)=9
\end{verbatim}
\noindent that is $H(t) = 9$   for $ t \geq 2$.
 \end{ex}

 \bigskip
  The example above shows that the finite set   $X\hada X'$ in Corollary \ref{generic3}, in general, is not a complete intersection. However we are able to compute its Hilbert function in the case $|X|=|X'|$ and this allows us to prove that $X\hada X'$ is never a complete intersection as long as $m>1$ (obviously it is for $m=1$).
  \bigskip
 \begin{teo}\label{HF3}
 Hypotheses as in Corollary \ref{generic3}.  Also suppose $|X|=|X'|=m,$ then $\tau_{X\hada X'}=m-1$ and  $HF_{X\hada X'}=HF_X HF_{ X'}.$
 \end{teo}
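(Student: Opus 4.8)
The plan is to exploit the geometry already established in Corollary \ref{generic3}: the set $X\hada X'$ sits as an $m\times m$ grid on the smooth quadric $Q:=L\hada L'$, cut out by the $m$ lines $\{P\hada L'\mid P\in X\}$ of one ruling and the $m$ lines $\{P'\hada L\mid P'\in X'\}$ of the other, the $m^2$ points being exactly their pairwise intersections. Identifying $Q$ with $\mathbb{P}^1\times\mathbb{P}^1$ through the Segre embedding, the two rulings become the two families of fibres, so $X\hada X'$ corresponds to a product set $S\times S'$ with $|S|=|S'|=m$, where $S,S'\subseteq\mathbb{P}^1$ index the two pencils of lines. I expect to first record that for $m$ collinear points the Hilbert function is $\min(t+1,m)$, so that $HF_X(t)=HF_{X'}(t)=\min(t+1,m)$.

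Next I would reduce the computation of $HF_{X\hada X'}(t)$ to an evaluation problem on $Q$. Since $Q$ is an irreducible, non-degenerate quadric hypersurface, its homogeneous coordinate ring is $R/(q)$ with $R=\mathbb{K}[x_0,\dots,x_3]$, which is arithmetically Cohen--Macaulay; hence the degree-$t$ piece is $H^0(Q,\mathcal{O}_Q(t))$ and the surjection $R_t\twoheadrightarrow H^0(Q,\mathcal{O}_Q(t))$ shows that, because every point of $X\hada X'$ lies on $Q$, the value $HF_{X\hada X'}(t)$ equals the rank of the evaluation map $H^0(Q,\mathcal{O}_Q(t))\to\mathbb{K}^{X\hada X'}$.

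Then, under the Segre identification, $H^0(Q,\mathcal{O}_Q(t))=H^0(\mathbb{P}^1,\mathcal{O}(t))\otimes H^0(\mathbb{P}^1,\mathcal{O}(t))$ by Künneth, and evaluation on the product $S\times S'$ is the tensor product of the two evaluation maps $H^0(\mathbb{P}^1,\mathcal{O}(t))\to\mathbb{K}^{S}$ and $H^0(\mathbb{P}^1,\mathcal{O}(t))\to\mathbb{K}^{S'}$. Since the rank of a tensor product of linear maps is the product of the ranks, and each rank is exactly the Hilbert function of $m$ points on a line, namely $\min(t+1,m)$, I obtain $HF_{X\hada X'}(t)=HF_X(t)\,HF_{X'}(t)=\min(t+1,m)^2$. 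This quantity first attains the value $m^2=|X\hada X'|$ at $t=m-1$ and is strictly smaller before, which yields $\tau_{X\hada X'}=m-1$.

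The main obstacle is making the product/Künneth step rigorous: one must verify that the grid genuinely is the product set $S\times S'$ with the two rulings matched to the two factors (this is exactly where Corollary \ref{generic3}(2), together with the grid identity $X\hada X'=(X\hada L')\cap(X'\hada L)$ from Theorem \ref{teop3}, is used), and that under the Segre map the restriction really factors as the tensor product of the two one-variable evaluation maps. Once the product structure is pinned down, the rank-multiplicativity of tensor products and the elementary formula $HF_X(t)=\min(t+1,m)$ for collinear points complete the argument.
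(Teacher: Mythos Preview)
Your proof is correct and takes a genuinely different route from the paper. The paper argues, for $m\geq 3$, via intersection theory on the quadric: any form of degree $t<m$ vanishing on $X\hada X'$ must contain each of the $m$ lines of either ruling (since it meets each line in $m>t$ points), hence vanishes identically on $Q=L\hada L'$; thus $(I_{X\hada X'})_t=q\cdot R_{t-2}$ and $HF_{X\hada X'}(t)=\binom{t+3}{3}-\binom{t+1}{3}=(t+1)^2$. The case $m=2$ is handled separately by observing the four points are not coplanar. Your argument instead identifies $Q\cong\mathbb{P}^1\times\mathbb{P}^1$ via Segre, uses projective normality of $Q$ to replace the evaluation map $R_t\to\mathbb{K}^{X\hada X'}$ by $H^0(\mathcal{O}_Q(t))\to\mathbb{K}^{X\hada X'}$, and then invokes K\"unneth together with rank-multiplicativity of the Kronecker product to obtain $HF_{X\hada X'}(t)=\min(t+1,m)^2$ directly. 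The paper's approach is more elementary (it avoids sheaf cohomology and K\"unneth and stays at the level of B\'ezout on the rulings), but it needs a separate argument for $m=2$. Your approach is more conceptual, works uniformly for all $m$, and \emph{immediately} extends to $|X|\neq |X'|$ to give $HF_{X\hada X'}(t)=\min(t+1,|X|)\cdot\min(t+1,|X'|)=HF_X(t)\,HF_{X'}(t)$, which the paper only speculates about after Theorem~\ref{HF3}. The one point you should state explicitly is that ``non-degenerate'' in Corollary~\ref{generic3}(2) means the quadric is smooth (it carries two distinct rulings of pairwise skew lines), so the Segre identification is legitimate.
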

 \begin{proof}For $m=2$ the four points of $X\hada X'$ cannot be coplanar since they belong to the two skew lines of
 $X\hada L'$ and so $HF_{X\hada X'}(t) =4,$ for all $t\geq 1.$
 \par
For $m\geq 3,$ by intersection theory we have that the quadric $L\hada L'$ (which is the unique quadric through $X\hada X'$ by Remark  \ref{unique}) is a fixed component of $I_{X\hada X'}$  in each degree $2\leq t<m.$  Then, for $0\leq t<m$ we have
$$HF_{X\hada X'}(t) = \binom{t+3}{3}-\binom{t+1}{3}=(t+1)^2$$
which equals  $HF_X (t)HF_{ X'}(t)$ since $HF_X (t)=HF_{X'}(t)=t+1$ for $t<m.$
\par In particular, $HF_{X\hada X'}(m-1)=m^2= |X| |X'|,$ hence $\tau_{X\hada X'}=m-1$ and, for all $t\geq m-1,$   $HF_{X\hada X'}(t) =m^2= HF_X (t)HF_{X'}(t).$
 \end{proof}
Obviously Theorem \ref{HF3} works also for $m=1.$
 \begin{oss} \rm
If $X$ is a finite set of projective points we set
 $$h_X=(HF_X(0),\dots,HF_X(\tau_X)).$$
 With this notation we can rephrase Theorem \ref{HF3} as
 $$h_{X\hada X'}=h_X\hada h_{X'}.$$
 \end{oss}
 \bigskip
 The following example shows that we may still have $HF_{X\hada X'}=HF_X HF_{ X'}$ even when $|X|\neq |X'|$. It may be worth to investigate if this is always the case, under the hypotheses of Corollary \ref{generic3} (see Example \ref{notprod}).

\begin{ex}\rm
Let $H,K$ be the planes defined by $11x_1-14x_2-2x_3$ and $22x_0-25x_2-13x_3$ and let $H',K'$ be the planes defined by $21x_1-2x_2-11x_3$ and $7x_0-6x_2+2x_3$  Let $L=H\cap K$ and $L'=H'\cap K'.$ Choose $X\subset L$ and $X'\subset L'$ where
\[X=\{[4: 4: 3: 1], [7: 4: 2: 8], [11:8:5:9] \}
\]
and
\[ X'=\{
[2: 3: 4:  5], [6: 4: 9:  6], [18:17:30:27], [94:76:149:118 ]\}.
\]
Let $I,J,K$ be respectively the ideals of $X$, $X'$ and $X\hada X'$. By \texttt{Singular} we obtain
\begin{verbatim}
HF(3,I,0)=1      HF(3,J,0)=1      HF(3,K,0)=1
HF(3,I,1)=2      HF(3,J,1)=2      HF(3,K,1)=4
HF(3,I,2)=3      HF(3,J,2)=3      HF(3,K,2)=9
HF(3,I,3)=3      HF(3,J,3)=4      HF(3,K,3)=12
HF(3,I,4)=3      HF(3,J,4)=4      HF(3,K,4)=12
\end{verbatim}

\end{ex}

\begin{cor}\label{ci} Hypotheses as in Corollary \ref{generic3} and $|X|= |X'|=m\geq 2.$ Then $X\hada X'$ is not a complete intersection.
 \end{cor}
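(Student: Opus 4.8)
The plan is to combine the Hilbert-function computation of Theorem \ref{HF3} with the structure of a complete intersection in $\mathbb{P}^3$. The key observation is that if $X\hada X'$ were a complete intersection, then by definition its ideal $I_{X\hada X'}$ would be generated by $n=3$ forms, say of degrees $d_1\leq d_2\leq d_3$. Since $X\hada X'$ lies on the irreducible quadric $L\hada L'$ (which is the unique quadric through $X\hada X'$ by Remark \ref{unique}, using $m\geq 2$), and since by the proof of Theorem \ref{HF3} this quadric appears as a fixed component in every degree $2\leq t<m$, the smallest-degree generator must be this quadric, so $d_1=2$. Thus a complete intersection structure would force $|X\hada X'|=d_1 d_2 d_3=2d_2 d_3$, an even number.

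First I would record from Theorem \ref{HF3} that $|X\hada X'|=m^2$. For this to be the degree of a complete intersection containing the quadric as one generator, we would need $m^2=2\,d_2\,d_3$ with $d_2,d_3\geq 2$. The parity obstruction already handles half the cases: if $m$ is odd then $m^2$ is odd, so it cannot equal the even number $2 d_2 d_3$, and we are done immediately. So the substantive case is $m$ even.

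For $m$ even I would argue via the full Hilbert function rather than just the degree. A complete intersection of type $(2,d_2,d_3)$ in $\mathbb{P}^3$ is arithmetically Gorenstein, and its Hilbert function is completely determined by $(2,d_2,d_3)$; in particular its first difference is symmetric and its regularity index is $2+d_2+d_3-3=d_2+d_3-1$. On the other hand Theorem \ref{HF3} gives $\tau_{X\hada X'}=m-1$ and the explicit values $HF_{X\hada X'}(t)=(t+1)^2$ for $0\leq t<m$. I would compare these two descriptions: the relation $2d_2d_3=m^2$ together with $d_2+d_3-1=m-1$, i.e. $d_2+d_3=m$, forces $d_2 d_3=m^2/2$; but then $d_2,d_3$ are the roots of $z^2-mz+m^2/2=0$, whose discriminant is $m^2-2m^2=-m^2<0$, so no real (let alone integer) solutions exist. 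This contradiction rules out the even case as well.

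The main obstacle I anticipate is justifying cleanly that $d_1=2$, i.e. that the quadric must genuinely be a minimal generator of $I_{X\hada X'}$ rather than merely containing the set. This is exactly what the fixed-component analysis in the proof of Theorem \ref{HF3} supplies, since $HF_{X\hada X'}(t)=(t+1)^2$ for $t<m$ is strictly smaller than $\binom{t+3}{3}$ for $t\geq 2$, forcing a degree-$2$ form in the ideal, and Remark \ref{unique} guarantees this quadric is unique and irreducible. Once $d_1=2$ is secured, the remainder is the short numerical contradiction above, so I would present the argument as: reduce to $d_1=2$, then derive $d_2+d_3=m$ and $2d_2d_3=m^2$ simultaneously and observe these are incompatible over the integers (indeed over the reals).
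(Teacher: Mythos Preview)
Your argument is correct and takes a genuinely different route from the paper's. The paper counts minimal generators explicitly: for $m=2$ it observes that $\dim_{\mathbb K}(I_{X\hada X'})_2=6$, so at least six quadric generators are needed; for $m\geq 3$ it shows that beyond the single quadric $L\hada L'$ one needs $2m+1$ additional generators in degree $m$, giving $2m+2>3$ minimal generators in all. You instead exploit the numerical invariants of a hypothetical complete intersection of type $(2,d_2,d_3)$: the degree forces $2d_2d_3=m^2$ while the regularity index forces $d_2+d_3=m$, and these are jointly infeasible (indeed by AM--GM $d_2d_3\leq m^2/4<m^2/2$). Your approach is slicker and avoids any bookkeeping of generators; the paper's approach has the advantage of exhibiting the actual shape of a minimal generating set.

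One small correction: you invoke Remark~\ref{unique} ``using $m\geq 2$'', but that remark requires $|X|,|X'|>2$, i.e.\ $m\geq 3$. Fortunately uniqueness of the quadric is irrelevant to your argument---you only need $d_1=2$, and this follows directly from $HF_{X\hada X'}(1)=4$ (so $(I_{X\hada X'})_1=0$, hence $d_1\geq 2$) together with the fact that the quadric $L\hada L'$ lies in $I_{X\hada X'}$ (so $(I_{X\hada X'})_2\neq 0$, hence $d_1\leq 2$). The separation into odd and even $m$ is likewise unnecessary: once $d_1=2$, the two constraints $d_2+d_3=m$ and $d_2d_3=m^2/2$ are incompatible for every $m\geq 2$ by the discriminant computation you give.
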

\begin{proof} First assume $m=2.$ Then
$ dim_{{\mathbb K}}\left(I_{X\hada X'}\right)_t=\left\{\begin{array}{cc} 0 & t=0,1\\ 6 & t=2\end{array}\right. .$ Thus a minimal system of generators of $I_{X\hada X'}$ contains at least six quadrics and so $X\hada X'$ cannot be complete intersection.
\par
Now assume $m\geq 3.$ From Remark \ref{unique} we know that $$dim_{{\mathbb K}}\left(I_{X\hada X'}\right)_t=\left\{\begin{array}{cc}0 & t=0,1\\ 1 & t=2\end{array}\right. ,$$ and so $ dim_{{\mathbb K}}\left(I_{X\hada X'}\right)_t\geq \binom{t+1}{3}, ~\forall t\geq 2.$ As in the proof of Theorem
\ref{HF3} we have that the quadric $L\hada L'$ is a fixed component of $I_{X\hada X'}$ in each degree $2\leq t<m,$ and so we need
$\binom{m+3}{3}-m^2-\binom{m+1}{3}=2m+1$ generators of degree $m.$ Thus a minimal system of generators of $I_{X\hada X'}$ consists of $2m+2>3$ forms and so $X\hada X'$ cannot be complete intersection.
\end{proof}

\bigskip
If we drop some of the assumptions of Corollary \ref{generic3} several behaviours may occur, as the following examples show.
\begin{ex}\label{notprod}
\rm
Let $H,K$ be the planes defined by $x_1-x_3$ and $14x_0-27x_2+10x_3$ and let $H',K'$ be the planes defined by $9x_1+5x_2-11x_3$ and $x_2-x_2$. Let $L=H\cap K$ and $L'=H'\cap K'.$ Note that this time $L\cap \Delta_1\not= \emptyset$ and  $L'\cap \Delta_1\not= \emptyset$. Choose $X\subset L$ and $X'\subset L'$ where
\[X=\Big\{
\begin{array}{l}
[1: 4:2:4], [8: 5: 6: 5], [37:40:34:40] , \\

[9:9:8:9], [65:98:70:98]
\end{array}\Big\}
\]
and
\[ X'=\Big\{
\begin{array}{l}[2: 5:  2:  5], [3:2: 3:3], [24:27,24, 33], \\

[13: 16: 13:19], [130:127:130:163]\end{array}\Big\}.
\]
Let $I,J,K$ be respectively the ideals of $X$, $X'$ and $X\hada X'$. By \texttt{Singular} we obtain
\begin{verbatim}
HF(3,I,0)=1      HF(3,J,0)=1      HF(3,K,0)=1
HF(3,I,1)=2      HF(3,J,1)=2      HF(3,K,1)=3
HF(3,I,2)=3      HF(3,J,2)=3      HF(3,K,2)=6
HF(3,I,3)=4      HF(3,J,3)=4      HF(3,K,3)=10
HF(3,I,4)=5      HF(3,J,4)=5      HF(3,K,4)=15
HF(3,I,5)=5      HF(3,J,5)=5      HF(3,K,4)=19
HF(3,I,6)=5      HF(3,J,6)=5      HF(3,K,6)=22
HF(3,I,7)=5      HF(3,J,7)=5      HF(3,K,7)=24
HF(3,I,8)=5      HF(3,J,8)=5      HF(3,K,8)=25
\end{verbatim}
\par
Notice that, in this case, the Hilbert function of $X\hada X'$ is not the product of the Hilbert functions of $X$ and $X'$.
\par
As a matter of fact, looking at the ideal of $X\hada X'$, we can notice that the first generator is
\begin{verbatim}
K[1]=14*x(0)-18*x(1)-27*x(2)+22*x(3)
\end{verbatim}
that is, $X\hada X'$ is a planar set of points. Moreover the first difference of its Hilbert function is $(1,2,3,4,5,4,3,2,1)$ showing that $X\hada X'$ is a complete intersection.
\end{ex}

\bigskip
The following two examples show  that $L\hada L'$ can be a quadric (necessarily  irreducible) also under the condition that $L\cap\Delta_1\neq \emptyset$ or  $L\cap\Delta_1\neq \emptyset\neq L'\cap\Delta_1.$ In both examples $X\hada X'$ is not a complete intersection.

 \begin{ex}\label{9points}
  \rm In this example we compute the ideal of $L\hada L'$ and the ideal of  $X\hada X'$ with  its Hilbert function, where $X$ and $X'$ are two sets of collinear points satisfying the hypotheses of Theorem \ref{teop3},   $L\cap\Delta_1\neq \emptyset$ and  $L'\cap\Delta_1=\emptyset.$
\par Let $H,K$ be the planes defined by $x_0+2x_1+x_2+x_3=0$ and $x_0+x_1+x_2-3x_3=0$ and let $H',K'$ be the planes defined by $x_0+2x_1-2x_2+x_3=0$ and $2x_0+2x_1+x_2-4x_3=0.$  Let $L=H\cap K$ and $L'=H'\cap K'.$ Choose $X\subset L$ and $X'\subset L'$ where
\[X=\{[4:-4:3:1], [6:-4:1:1], [5:-4:2:1] \}
\]
and
\[ X'=\{[-1:2:2:1], [11:-8:-2:1], [-7:7:4:1] \}.
\]
By computing the ideal of $L\hada L'$ and the ideal of $X\hada X'$ with \texttt{Singular}, we obtain
{\small{\begin{verbatim}

ideal J=HPr(L,L',3)
J[1]=1/5xy-21/50y^2-3/5yz-12/5xw+77/25yw-14/5zw+588/25w^2

ideal I=HPr(X,X',3)
I[1]=1/5x(0)x(1)-21/50x(1)^2-3/5x(1)x(2)-12/5x(0)x(3)+
       +77/25x(1)x(3)-14/5x(2)x(3)+588/25x(3)^2
I[2]=1/5x(0)^3-9261/5000x(1)^3-9/5x(0)^2x(2)+
        +27/5x(0)x(2)^2-27/5x(2)^3-15x(0)^2x(3)+
        +25137/625x(1)^2x(3)+27x(0)x(2)x(3)+
        +54x(2)^2x(3)+370x(0)x(3)^2-350763/1250x(1)x(3)^2-
      -165x(2)x(3)^2-1389774/625x(3)^3
I[3]=-x(0)^2x(2)+441/100x(1)^2x(2)+6x(0)x(2)^2-9x(2)^3+
         +40x(0)x(2)x(3)-1071/25x(1)x(2)x(3)+90x(2)^2x(3)-
        -15x(1)x(3)^2-14274/25x(2)x(3)^2+180x(3)^3
I[4]=-x(0)x(2)^2+21/10x(1)x(2)^2+3x(2)^3-3/10x(1)^2x(3)-
        -11/2x(1)x(2)x(3)-101/5x(2)^2x(3)+36/5x(1)x(3)^2+
        +66x(2)x(3)^2-216/5x(3)^3
I[5]=1/10x(1)^3+2/5x(1)^2x(3)-464/5x(1)x(3)^2-3584/5x(3)^3
I[6]=-x(1)^2x(2)-16x(1)x(2)x(3)+320x(0)x(3)^2-672x(1)x(3)^2-
        -224x(2)x(3)^2-3136x(3)^3
I[7]=-x(1)x(2)^2+32/5x(0)^2x(3)-3528/125x(1)^2x(3)-
        -152/5x(0)x(2)x(3)-84/5x(1)x(2)x(3)+28/5x(2)^2x(3)-
        -448x(0)x(3)^2+84672/125x(1)x(3)^2-392/5x(2)x(3)^2+
         +471968/125x(3)^3
I[8]=-25/16x(2)^3-6x(0)^2x(3)+5367/200x(1)^2x(3)+
        +57/2x(0)x(2)x(3)+181/8x(1)x(2)x(3)+6x(2)^2x(3)+
        +365x(0)x(3)^2-53229/100x(1)x(3)^2+349/4x(2)x(3)^2-
         -76294/25x(3)^3

\end{verbatim}}}
\noindent whose Hilbert function is given by
\begin{verbatim}
HF(3,I,0)=1
HF(3,I,1)=4
HF(3,I,2)=9
HF(3,I,3)=9
\end{verbatim}
\noindent that is $H(t) = 9$   for $ t \geq 2$.

 \end{ex}
 \bigskip

 \begin{ex}
 \rm In this example we compute the ideal of $L\hada L'$ and  the ideal of  $X\hada X'$ with  its Hilbert function, where $X$ and $X'$ are two sets of collinear points satisfying the hypotheses of Theorem \ref{teop3} and  $L\cap\Delta_1\neq \emptyset \neq L'\cap\Delta_1.$
 \par Let $H,K$ be the planes defined by $x_0+2x_1+x_2+x_3=0$ and $x_0+x_1+x_2-3x_3=0$ and let $H',K'$ be the planes defined by $x_0+x_1-2x_2+x_3=0$ and $x_0+x_1+x_2-4x_3=0.$  Let $L=H\cap K$ and $L'=H'\cap K'.$ Choose $X\subset L$ and $X'\subset L'$ where
$$X=\{[4:-4:3:1], [6:-4:1:1], [5:-4:2:1] \} $$
and
$$ X'=\{[1:-1:\frac{5}{3}:1],  [2:-2:\frac{5}{3}:1],  [3:-3:\frac{5}{3}:1] \}.$$
By computing the ideal of $L\hada L'$ and the ideal of $X\hada X'$ with \texttt{Singular}, we obtain
{\small{
\begin{verbatim}
ideal J=HPr(L,L',3);
J[1]=-3/5x(1)x(2)-4x(0)x(3)+7x(1)x(3)-28/5x(2)x(3)+196/3x(3)^2

ideal I=HPr(x(0),x(0)',3);
I[1]=-3/5x(1)x(2)-4x(0)x(3)+7x(1)x(3)
I[2]=-3/5x(2)^3+6x(2)^2x(3)-55/3x(2)x(3)^2+50/3x(3)^3
I[3]=-x(0)x(2)^2-5/3x(0)x(2)x(3)-50x(0)x(3)^2+250/3x(1)x(3)^2
I[4]=-x(0)^2x(2)-40/3x(0)^2x(3)+185/6x(0)x(1)x(3)-
        -25/2x(1)^2x(3)
I[5]=1/4x(1)^3-6x(1)^2x(3)+44x(1)x(3)^2-96x(3)^3
I[6]=-x(0)x(1)^2+24x(0)x(1)x(3)-176x(0)x(3)^2-288/5x(2)x(3)^2+
       +672x(3)^3
I[7]=-x(0)^2x(1)+24x(0)^2x(3)+132/5x(0)x(2)x(3)+
       +216/25x(2)^2x(3)-308x(0)x(3)^2-1008/5x(2)x(3)^2+
       +1176x(3)^3
I[8]=-x(0)^3+90x(0)^2x(3)-111x(0)x(1)x(3)+45x(1)^2x(3)+
       +99x(0)x(2)x(3)+162/5x(2)^2x(3)-341x(0)x(3)^2-
       -330x(1)x(3)^2-2448/5x(2)x(3)^2+2022x(3)^3
\end{verbatim}}}
\noindent whose Hilbert function is given by
\begin{verbatim}
HF(3,I,0)=1
HF(3,I,1)=4
HF(3,I,2)=9
HF(3,I,3)=9
\end{verbatim}
\noindent that is $H(t) = 9$   for $ t \geq 2$.
\end{ex}

 \section{Computing Hadamard products in \texttt{Singular}}\label{singular}

Given the ideals $I$ and $J$ of respectively varieties $X$ and $Y$ in ${\mathbb{P}}^n$, the computation of the ideal of $X \hada Y$ may be achieved with a saturation and elimination as follows:
\begin{itemize}
\item Work in the ring ${\mathbb{C}}[y_{10}, \ldots, y_{1n}, y_{20}, \ldots, y_{2n}, x_0, \ldots, x_n]$.
\item Form the ideal $I(y_{1i}) + J(y_{2i}) + \langle x_0 - y_{10}y_{20}, \ldots, x_n - y_{1n}y_{2n} \rangle$.
\item Saturate with respect to the product $x_0 \cdots x_n$.
\item Eliminate the $2n+2$ variables $y_{i0}, \ldots, y_{in}$.
\end{itemize}
For completeness, we show here a procedure in \texttt{Singular} which performs the previous steps and that we used to compute the examples in the paper.
{\tiny{
\begin{verbatim}
LIB "ncalg.lib";
LIB "poly.lib";
LIB "rootsmr.lib";
LIB "elim.lib";


proc HPr(ideal I1, ideal I2, int n) /* where n+1 is the number of variables */
{

   ring RH=0,(y(1..2)(0..n),x(0..n)),dp;
   int i;
   ideal T1;
   ideal T2;
   poly elle1;
   poly elle2;
   poly elle3=1;
   map f1;
   map f2;

   T1=y(1)(0);
   for (i=1; i<=n; i=i+1)
   {
      elle1=y(1)(i);
      T1=T1+elle1;
   }
   f1=r,T1;
   ideal H1=f1(I1);


   T2=y(2)(0);
   for (i=1; i<=n; i=i+1)
   {
      elle2=y(2)(i);
      T2=T2+elle2;
   }
   f2=r,T2;
   ideal H2=f2(I2);

   int j;
   ideal H=0;
   for (j=0; j<=n; j=j+1)
   {
      H=H+ideal(x(j)-y(1)(j)*y(2)(j));
      elle3=elle3*x(j);
   }

  H=H+H1+H2;
  ideal Ksat=elle3;
  ideal HH=sat(H,Ksat)[1];

  ideal HHH=elim(H,1..2*(n+1));

  setring r;
  ideal HFin=imap(RH,HHH);

  return(HFin);
}

\end{verbatim}
}}


\begin{thebibliography}{MMM}

 \bibitem[BCK]{BCK}
C.~Bocci,  E.~Carlini  and J.~Kileel
{\it  Hadamard products of linear spaces}, Journal of Algebra {\bf 448} (2016), 595-617.


\bibitem[CMS]{CMS}  M.A. Cueto, J. Morton and B. Sturmfels, {\it Geometry of the restricted Boltzmann machine}, Algebraic Methods in Statistics and Probability (eds. M. Viana and H. Wynn), American Mathematicals Society, Contemporary Mathematics {\bf 516} (2010), 135--153.

\bibitem[CTY]{CTY} M.A. Cueto, E.A. Tobis and J. Yu, {\it An implicitization challenge for binary factor analysis}, J. Symbolic Comput. \textbf{45} (2010), no. 12, 1296--1315.


\bibitem[HM]{HM} R.A. Horn and R. Mathias, {\it Block-matrix generalizations of Schur's basic theorems on Hadamard products}, Linear Algebra Appl., {\bf 172} (1992), 337--346.

\bibitem[Liu]{Liu} S. Liu, {\it Inequalities involving Hadamard products of positive semidefinite matrices}, J. Math. Ana. Appl., {\bf 243} (2000b), 458--463.

\bibitem[Liu2]{Liu2} S. Liu, {\it On the Hadamard Product of Square Roots of Correlation Matrices}, Econometric Theory, {\bf 18/19} (2002b/2003), 1007/703--704.

\bibitem[LN]{LN} S. Liu and H. Neudecker,  {\it Some statistical properties of Hadamard products of random matrices}, Statist. Papers, {\bf 42} (2001a), 475--487.

\bibitem[LNP]{LNP}  S. Liu, H. Neudecker and W. Polasek, {\it  The Hadamard product and some of its applications in statistics}, Statistics, {\bf 26} (1995a) no. 4, 365--373.

\bibitem[LT]{LT} S. Liu and G. Trenkler, {\it Hadamard, Khatri-Rao, Kronecker and other matrix products},  Int. Jour. of Information and Systems Sciences, {\bf 4} (1), (2008), 160--177.
\end{thebibliography}
\end{document}